\title{Presheaf models of quantum computation: \\an outline
}
\titlerunning{Presheaf models of quantum computation}  
\author{Octavio Malherbe\inst{1} \and Philip Scott\inst{2}
  \and Peter Selinger\inst{3}}
\authorrunning{Malherbe, Scott, and Selinger} 
\institute{IMERL-FING, Universidad de la Rep{\'u}blica, Montevideo, Uruguay\\
\email{malherbe@fing.edu.uy}\\
\and
Dept. of Mathematics and Statistics, University of Ottawa, Canada\\
 \email{phil@site.uottawa.ca}
\and Dept. of Mathematics and Statistics, Dalhousie University, Halifax, Canada
\email{selinger@mathstat.dal.ca}
}
\date{\today}
\begin{document}
\maketitle
\begin{abstract}
This paper outlines the construction of categorical models of
higher-order quantum computation. We construct a concrete denotational
semantics of Selinger and Valiron's quantum lambda calculus, which was
previously an open problem. We do this by considering presheaves
 over appropriate base categories arising from first-order
quantum computation. The main technical ingredients are Day's
convolution theory and Kelly and Freyd's notion of continuity of
functors. We first give an abstract description of the properties
required of the base categories for the model construction to work. We
then exhibit a specific example of base categories satisfying these
properties.

\end{abstract}

\section{Introduction}

Quantum computing is based on the laws of quantum physics. While no actual general-purpose quantum computer has yet been built, research in the last two decades indicates that quantum computers would be vastly more powerful than classical computers. For instance, Shor~\cite{Shor 94} proved in 1994 that the integer factoring problem can be solved in polynomial time on a quantum computer, while no efficient classical algorithm is known.

 Logic has played a key role in the development of classical computation theory, starting with the  foundations of  the subject in the 1930's by Church, G\"odel, Turing, and Kleene. For example, the pure untyped lambda calculus, one of the first models of computation invented by Church, can be simultaneously regarded as a prototypical functional programming language as well  as a formalism for denoting proofs. This is the so-called {\em proofs-as-programs} paradigm. Indeed, since the 1960's, various systems of typed and untyped lambda calculi have been developed, which on the one hand yield proofs in various systems of constructive and/or higher-order logic, while on the other hand denoting functional programs.  Modern programming languages such as  ML, Haskell, and Coq are often viewed in this light.

Recent research by Selinger, Valiron, and others \cite{SelVal 2006A,SelVal 2009} in developing ``quantum lambda calculi'' has shown that Girard's {\em linear logic} {\cite{GIRARD 87}} is a logical system that corresponds closely to the demands of quantum computation. Linear logic, a resource sensitive logic, turns out to formalize one of the central principles of quantum physics, the so-called {\em no-cloning property}, which asserts that a given unknown quantum state cannot be replicated. This property is reflected on the logical side by the requirement that a given logical assumption (or ``resource'') can only be used once. However, until now, the correspondence between linear logic and quantum computation has mainly been explored at the syntactic level.

In this paper we construct mathematical (semantic) models of higher-order quantum computation.
The basic idea is to start from existing low level models, such as the category of superoperators, and to use a Yoneda type presheaf
construction to adapt and extend these models to a higher order quantum setting.
To implement the latter, we use Day's theory of monoidal structure in presheaf categories, as well as the Freyd-Kelly  theory of continuous functors, to lift
 the required quantum structure {\cite{B.Day70,Freyd-Kelly 1972}}. Finally, to handle the probabilistic aspects of quantum  computation,  we employ Moggi's computational monads {\cite{Moggi88}}.

Our model construction depends on a sequence of
 categories and functors $\cB \rightarrow \cC\rightarrow \cD$, as well as a collection $\Gamma$ of cones
 in $\cD$. We use this data to obtain a pair of adjunctions
 \[\xymatrix{
[\cB^{op},\Set]\ar@<1ex>[rr]^{L}&&[\cC^{op},\Set]\ar@<1ex>[rr]^{F}\ar@<1ex>[ll]^{\Phi^{*}}_{\bot}&&[\cD^{op},\Set]_{\Gamma}
\ar@<1ex>[ll]^{G}_{\bot}}\]
   in which the left-hand adjunction gives an appropriate categorical model of the underlying linear logic, and the right-hand
adjunction gives a Moggi monad for probabilistic effects.
We then give sufficient conditions on $\cB \rightarrow \cC\rightarrow \cD$ and $\Gamma$ so that the
 resulting structure is a model of the quantum lambda calculus.
 One can describe various
 classes of concrete models by appropriate choices of diagrams $\cB \rightarrow \cC\rightarrow \cD$ and cones $\Gamma$.

 In this paper, we focus on the categorical aspects of
the model construction. Thus, we will not review the syntax of the
quantum lambda calculus itself (see~\cite{SelVal 2006A} and~\cite{SelVal 2009} for a quick review).
 Instead, we  take as our starting
point Selinger and Valiron's definition of a {\em categorical model of
the quantum lambda calculus}~\cite{SelVal 2009}. It was  proven
in~\cite{SelVal 2009} that the quantum lambda calculus forms an internal
language for the class of such models. This is similar to the well-known
interplay between typed lambda calculus and cartesian closed
categories~\cite{LambekScott86}. What was left open in~\cite{SelVal
2009} was the construction of a concrete such model (other than the one
given by the syntax itself). This is the question whose answer we sketch here.  Further
details can be found in the first author's PhD thesis \cite{Malherbe2010}.

\section{Categories of completely positive maps and superoperators}\label{COMP POSITIVE MAPS}\label{SUPEROPERATORS}

We first recall  various categories of finite dimensional Hilbert spaces
that we use in our study.
  Let $V$ be a finite dimensional Hilbert space, i.e., a finite
  dimensional complex inner product space. We write $\LL(V)$ for
  the space of linear functions $\rho:V\to V$.
 \begin{definition}\rm
   Let $V, W$ be finite dimensional Hilbert spaces. A linear function
   $F:\LL(V)\to\LL(W)$ is said to be {\em completely positive} if it
   can be written in the form
   $F(\rho) = \sum_{i=1}^{m} F_i\rho F_i^{\dagger}$,
   where $F_i:V\to W$ is a linear function and $F_i^{\dagger}$ denotes the linear adjoint of $F_i$ for $i=1,\ldots,m$.
 \end{definition}

 \begin{definition}\rm
   The category $\CPM_s$ of {\em simple completely positive maps} has
   finite dimensional Hilbert spaces as objects, and the morphisms
   $F:V\to W$ are completely positive maps $F:\LL(V)\to\LL(W)$.
 \end{definition}

 \begin{definition}\label{COMPLETE POSITIVE MAPS}\rm
   The category $\CPM$ of {\em completely positive maps} is defined as
   $\CPM=\CPM_s^{\oplus}$, the biproduct completion of $\CPM_s$.
   Specifically, the objects of $\CPM$ are finite sequences
   $(V_1,\ldots,V_n)$ of finite-dimensional Hilbert spaces, and a
   morphism $F:(V_1,\ldots,V_n)\to (W_1,\ldots,W_m)$ is a matrix
   $(F_{ij})$, where each $F_{ij}:V_j\to W_i$ is a completely positive
   map. Composition is defined by matrix multiplication.
 \end{definition}

 \begin{remark}
The category $\CPM$ is the same (up to equivalence) as
   the category $\W$ of~\cite{Sel 2004} and the category
   $\CPM(\FdHilb)^{\oplus}$ of~\cite{Sel 2005}.
 \end{remark}

 Note that for any two finite dimensional Hilbert spaces $V$ and $W$,
 there is a canonical isomorphism
 $\phi_{V,W} : \LL(V\otimes W) \to \LL(V)\otimes \LL(W).$

 \begin{remark}
 The categories $\CPM_s$ and $\CPM$ are symmetric monoidal.
 For $\CPM_s$, the tensor product is given on objects by the tensor
 product of Hilbert spaces $V\bar{\otimes} W = V\otimes W$, and
 on morphisms by the following induced map $f\bar{\otimes} g$:=
 $\LL(V\otimes W) \stt{\phi_{V,W}}
 \LL(V)\otimes \LL(W)\stt{f\otimes g}  \LL(X)\otimes \LL(Y)
 \stt{\phi^{-1}_{X,Y}}\LL(X \otimes Y)$.
 The remaining structure (units, associativity, symmetry  maps) is
 inherited from Hilbert spaces.   Similarly, for the
symmetric monoidal structure on $\CPM$, define
$(V_i)_{i\in I} \otimes (W_j)_{j\in J} = (V_i\otimes W_j)_{i\in I, j\in J}.$
 This extends to morphisms in an obvious way. For details, see~\cite{Sel 2004}.
\end{remark}

\begin{definition}\rm
 We say that a linear map $F:\LL(V)\rightarrow \LL(W)$
is  \textit{trace preserving}  when it satisfies
$\tr_W(F(\rho))=\tr_V(\rho)\label{TRACE PRESERVING CONDITION}$
for all positive $\rho\in \LL(V).$
$F$ is called {\em trace
non-increasing} when it satisfies
$\tr_W(F(\rho))\leq \tr_V(\rho)\label{TRACE NON-INCREASING CONDITION}$
for all positive $\rho \in \LL(V).$
\end{definition}

\begin{definition}\rm
 A linear map $F :\LL(V)\rightarrow \LL(W)$ is called a
 {\em trace preserving superoperator} if it is completely positive and
 trace preserving, and it is  a {\em trace non-increasing
 superoperator} if it is completely positive and trace non-increasing.
 \end{definition}

\begin{definition}\rm
  A completely positive map $F:(V_1,\ldots,V_n) \to
 (W_1,\ldots,W_m)$ in the category $\CPM$ is called a {\em trace
 preserving superoperator} if for all $j$ and all positive $\rho\in \LL(V_j)$,
      $ \sum_i \tr(F_{ij}(\rho)) = \tr(\rho),$
 and a {\em trace non-increasing superoperator} if for all $j$ and all
 positive $\rho\in \LL(V_j)$,
       $\sum_i \tr(F_{ij}(\rho)) \leq \tr(\rho).$
 \end{definition}
 We now define four symmetric monoidal categories of
 superoperators. All of them are symmetric monoidal subcategories of $\CPM$.

\noindent\begin{minipage}{\textwidth}
 \begin{definition}\label{Q OPLUS CATEGORY}\label{Q SIMPLE SUPEROPERATORS}\label{TRACE PRESERVING CATEGORY}\ \rm
  \begin{itemize}
 \item $\QQ_s$ and $\QQ'_s$ have the same objects as $\CPM_s$, and $\QQ$ and $\QQ'$ have the
 same objects as $\CPM$.
 \item The morphisms of $\QQ_s$  and $\QQ$ are trace non-increasing superoperators,
   and the morphisms of $\QQ'_s$ and $\QQ'$ are trace preserving
   superoperators.
 \end{itemize}
\end{definition}
\end{minipage}

\begin{remark} The categories $\QQ_s$, $\QQ$, $\QQ'_s$, and $\QQ'$ are all symmetric
 monoidal. The symmetric monoidal structure is inherited from $\CPM_s$ and $\CPM$, respectively, and
 it is easy to check that all the structural maps are trace
 preserving.
 \end{remark}

\begin{lemma}
$\QQ$ and $\QQ'$ have finite coproducts.
\end{lemma}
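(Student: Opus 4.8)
The plan is to reuse the biproduct structure that $\CPM=\CPM_s^{\oplus}$ already carries. Recall that in $\CPM$ the zero object is the empty sequence $()$, and the biproduct of $(V_1,\ldots,V_n)$ and $(W_1,\ldots,W_m)$ is the concatenation $(V_1,\ldots,V_n,W_1,\ldots,W_m)$ together with the evident injection and projection matrices. Since $\QQ$ and $\QQ'$ are wide subcategories of $\CPM$ (the same objects, a subclass of the morphisms), it suffices to check that the coproduct half of this biproduct structure still lives inside $\QQ$ and inside $\QQ'$; uniqueness of mediating morphisms is then inherited from $\CPM$ for free. As a first step, the empty sequence $()$ is initial in both categories: the unique $\CPM$-morphism $()\to(W_1,\ldots,W_m)$ is the empty matrix, and the defining (in)equality of a trace non-increasing, resp.\ trace preserving, superoperator is universally quantified over the source components, of which there are none, so it holds vacuously.

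Next, for binary coproducts, take $A\oplus B=(V_1,\ldots,V_n,W_1,\ldots,W_m)$ with the $\CPM$-injections $\iota_A$ and $\iota_B$. A one-line computation shows these are trace preserving, their only nonzero entries being identity maps $\mathrm{id}_{V_j}$ and $\mathrm{id}_{W_k}$; hence they are morphisms of $\QQ'$, a fortiori of $\QQ$. Given $f\colon A\to C$ and $g\colon B\to C$ in $\QQ$ (resp.\ $\QQ'$), consider the $\CPM$-copairing $[f,g]\colon A\oplus B\to C$, whose matrix is obtained by placing the columns of $f$ next to the columns of $g$. Every entry of $[f,g]$ is an entry of $f$ or of $g$, hence a completely positive map, so $[f,g]$ is a legitimate morphism of $\CPM$. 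The crucial observation is that the trace condition is checked one source component, i.e.\ one column, at a time: the columns indexed by the $V_j$ reproduce verbatim the trace condition for $f$, and those indexed by the $W_k$ reproduce the trace condition for $g$. Therefore $[f,g]$ is trace non-increasing whenever $f$ and $g$ are, and trace preserving whenever $f$ and $g$ are, so it belongs to $\QQ$ (resp.\ $\QQ'$); it satisfies $[f,g]\circ\iota_A=f$ and $[f,g]\circ\iota_B=g$, and it is the unique morphism with this property since it is already unique in $\CPM$.

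Combining the initial object with binary coproducts yields all finite coproducts in both $\QQ$ and $\QQ'$. I expect no genuine obstacle here beyond the bookkeeping; the one point worth isolating is the column-wise character of the trace conditions, which is precisely what allows copairing — concatenation of columns — to stay inside the subcategories. Note that the analogous construction does \emph{not} produce products, since forming the pairing of $f$ and $g$ stacks their matrices vertically and thereby adds their column traces, which need not respect the trace bound; this is why the lemma is stated for coproducts.
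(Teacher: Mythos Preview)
Your proof is correct and follows exactly the approach the paper takes: restrict the biproduct/coproduct structure of $\CPM$ and observe that the injections and copairing maps satisfy the trace conditions. The paper's own proof is a single line (``The injection/copairing maps are as in $\CPM$ and are trace preserving''), so your version is simply a more careful unpacking of the same idea; the column-wise observation you highlight is precisely the content of that one line.
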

\begin{proof}

The injection/copairing maps are as in $\CPM$ and are
 trace preserving.
\end{proof}

\section[Presheaf models]
         {Presheaf models of a quantum lambda calculus}\label{PREASHEAVES MODELS OF Q C}

Selinger defined an elementary quantum flow chart language in \cite{Sel 2004}, and gave a denotational model in terms of superoperators. This axiomatic framework captures the behaviour and interconnection between the basic concepts of quantum computation  (for example, the manipulation of quantum bits under the basic operations of measurement and unitary transformation) in a lower-level language. In particular, the semantics of this framework is very well understood: each program corresponds to a concrete superoperator.

Higher-order functions are functions that can input or output other functions. In order to deal with such functions, Selinger and Valiron introduced, in a series of papers \cite{SelVal 2006B,SelVal 2008,SelVal 2009}, a typed lambda calculus for quantum computation and investigated several aspects of its semantics. In this context, they combined two well-established areas:  the intuitionistic fragment of Girard's linear logic~\cite{GIRARD 87} and Moggi's computational monads ~\cite{Moggi88}.

The type system of Selinger and Valiron's quantum lambda calculus is based on intuitionistic linear logic. As is usual in linear logic, the logical rules of weakening and contraction are introduced in a controlled way by an operator ``$!$'' called ``of course'' or ``exponential''. This operator creates a bridge between two different kinds of computation. More precisely, a value of a general type $A$ can only be used once,
 whereas a value of type $!A$ can be copied and used multiple
 times. As mentioned in the introduction, the impossibility of copying quantum information is one of the
 fundamental differences between quantum information and classical
 information, and is known as the {\em no-cloning property}.
 From a logical perspective, this is related to the failure of the contraction rule; thus it seems natural to use linear logic in discussing quantum computation.  It is also well known that  categorically, the operator ``$!$'' satisfies the properties of a {\em comonad} (see~\cite{MELLIES}).

Since the quantum lambda calculus has higher-order functions, as well as probabilistic operations
(namely measurements), it must be equipped with an evaluation strategy in order to be consistent. Selinger and Valiron addressed this by choosing the {\em call-by-value} evaluation strategy. This introduces a distinction between {\em values} and {\em computations}. At the semantic level, Moggi \cite{Moggi88} proposed using the notion of monad as an appropriate tool for interpreting computational behaviour. In our case, this will be a strong monad.

So let us now describe Selinger and Valiron's notion  of a {\em categorical model
 of the quantum lambda calculus} \cite{SelVal 2009}.

\subsection{Categorical models of the quantum lambda calculus}\label{CAT MODEL QLC}

  In what follows,
let $(\cC,\otimes,I,\alpha,\rho,\lambda,\sigma)$ be a symmetric monoidal category {\cite{MacLane 91}}.
\begin{definition}
\rm
A \textit{symmetric monoidal comonad} $(!,\delta,\varepsilon,m_{A,B},m_I)$ is a comonad $(!,\delta,\varepsilon)$ where the functor $!$ is a monoidal functor $(!,m_{A,B},m_I)$, i.e., with natural transformations $m_{A,B}:{!}A\otimes{!}B\to{!}(A\otimes B)$ and $m_I:I\to{!}I$, satisfying appropriate coherence axioms~\cite{KOCK70}
 such that  $\delta$ and $\varepsilon$ are symmetric monoidal natural transformations.
\end{definition}

\begin{definition}
\label{LINEAR EXPONENTIAL COMONAD}
\rm
A \textit{linear exponential comonad} is a symmetric monoidal comonad $(!,\delta,\varepsilon,m_{A,B},m_I)$ such that for every $A\in \cC$, there exists a commutative comonoid $(A,d_A,e_A)$ satisfying some technical requirements (see~\cite{BIERMAN95,SelVal 2009}).
\end{definition}

\begin{definition}
\rm
Let $(T,\eta,\mu)$ be a strong monad on $\cC$. We say that $\cC$ has \textit{Kleisli exponentials} if there exists a functor $[-,-]_{k}:\cC^{op}\times \cC\rightarrow \cC$ and a natural isomorphism:
$\cC(A\otimes B,TC)\cong\cC(A,[B,C]_{k}).$
\end{definition}

\begin{definition}[Selinger and Valiron~\cite{SelVal 2009}]
\label{LINEAR CATEGORY FOR DUPLICATION}
\rm
A \textit{linear category for duplication} consists of a symmetric monoidal category $(\cC,\otimes,I)$ with the following structure:
\begin{itemize}
\item an idempotent, strongly monoidal, linear exponential comonad $(!,\delta,\varepsilon,d,e)$,
\item a strong monad $(T,\mu,\eta,t)$ such that $\cC$ has Kleisli exponentials.
\end{itemize}
\end{definition}
Further, if the unit $I$ is a terminal object we shall speak of an {\em affine linear category for duplication}.
\begin{remark}
\label{ADJUNCTION PRESENTATION}
\rm
 Perhaps surprisingly, following the work of Benton, a linear category for duplication can be obtained from a structure that is much easier to describe, namely, a pair of monoidal adjunctions {\cite{BENTON 1994,MELLIES,KOCK72}}
\vspace{-1ex}
\[\xymatrix{
(\mathcal{B},\times,1)\ar@<1ex>[rr]^{(L,l)}&&(\mathcal{C},\otimes,I)\ar@<1ex>[rr]^{(F,m)}\ar@<1ex>[ll]^{(I,i)}_{\bot}&&(\mathcal{D},\otimes,I),\ar@<1ex>[ll]^{(G,n)}_{\bot}}\]
where the category $\cB$ has finite products and $\cC$ and $\cD$ are symmetric monoidal closed.
The monoidal adjoint pair of functors on the left represents a linear-non-linear model of linear logic in the sense of Benton~\cite{BENTON 1994}, in which we obtain a monoidal comonad by setting $!=L\circ I$. The monoidal adjoint pair on the right gives rise to  a strong monad $T=G\circ F$ in the sense of Kock~\cite{KOCK70,KOCK72}, which is also a computational monad in the sense of Moggi~\cite{Moggi88}.
\end{remark}
We now state the main definition of a model of the quantum lambda calculus.

\begin{definition}[Models of the quantum lambda calculus~\cite{SelVal 2009}]
\label{DEF MODEL OF QUANTUM LAMBDA CALCULUS}
\rm
An \textit{abstract model of the quantum lambda calculus} is an affine linear category for duplication $\cC$ with finite coproducts, preserved by the comonad $!$. A {\em concrete model of the quantum lambda calculus} is an abstract model  such  that there exists a full and faithful embedding $\QQ\hookrightarrow \cC_{T}$, preserving tensor $\otimes$ and coproduct $\oplus$ up to isomorphism, from the category $\QQ$ of norm non-increasing superoperators (see Definition~\ref{Q OPLUS CATEGORY}) into the Kleisli category of the monad $T$.
\end{definition}
\begin{remark} To make the connection to quantum lambda calculus: the
 category $\cC$, the Kleisli category $\cC_T$, and the co-Kleisli
 category $\cC_{!}$ all have the same objects, which correspond to
 {\em types} of the quantum lambda calculus. The morphisms $f:A\to B$
 of $\cC$ correspond to {\em values} of type $B$ (parameterized by
 variables of type $A$). A morphism $f:A\to B$ in $\cC_T$, which is
 really a morphism $f:A\to TB$ in $\cC$, corresponds to a {\em
 computation} of type $B$ (roughly, a probability distribution of
 values). Finally, a morphism $f:A\to B$ in $\cC_{!}$, which is really
 a morphism $f:{!A}\to B$ in $\cC$, corresponds to a {\em classical
 value} of type $B$, i.e., one that only depends on classical
 variables. The idempotence of ``$!$'' implies that morphisms ${!A}\to
 B$ are in one-to-one correspondence with morphisms ${!A}\to{!B}$,
 i.e., classical values are duplicable. For details, see
 \cite{SelVal 2009}.
 \end{remark}

\subsection{Outline of the procedure for obtaining a concrete model}\label{OUTLINE OF THE PROCEDURE}

 We construct the model in two stages. The first (more elaborate) stage
 constructs {\em abstract} models by
 applying certain general presheaf constructions to diagrams of
 functors $\cB\rightarrow\cC\rightarrow\cD.$
 In Section~\ref{MAIN THEOREM OF CHAPTER 4}
 we find the precise conditions required on diagrams $\cB\rightarrow\cC\rightarrow\cD$
to obtain a valid abstract
 model.
In the second stage, we  construct a {\em concrete} model of the
 quantum lambda calculus by identifying particular base categories so that  the remaining conditions of Definition~\ref{DEF MODEL OF QUANTUM LAMBDA CALCULUS} are satisfied. This
 is the content of Sections~\ref{Q''CATEGORY AND FUNCTORS PSI AND PHI} and~\ref{A CONCRETE MODEL}.

 We divide the two stages of construction into eight main steps.

\begin{enumerate}
\item The basic idea of the construction is to lift a sequence of functors
 $\cB\stackrel{\Phi}\rightarrow\cC\stackrel{\Psi}\rightarrow\cD$  into a pair of adjunctions between presheaf categories
      \[\xymatrix{
[\cB^{op},\Set]\ar@<1ex>[rr]^{L}&&[\cC^{op},\Set]\ar@<1ex>[rr]^{F_1}\ar@<1ex>[ll]^{\Phi^{*}}_{\bot}&&[\cD^{op},\Set].
\ar@<1ex>[ll]^{\Psi^{*}}_{\bot}}\]

   Here, $\Phi^{*}$ and $\Psi^{*}$ are the precomposition functors, and $L$ and
  $ F_1$ are their left Kan extensions.  By Remark~\ref{ADJUNCTION PRESENTATION}, such a pair of
   adjunctions potentially yields a linear category for duplication,
   and thus, with additional conditions, an abstract model of
   quantum computation. Our goal is to identify the particular
   conditions on $\cB$, $\cC$, $\cD$, $\Phi$, and $\Psi$ that make this construction
   work.
  \item By Day's convolution construction (see~\cite{B.Day70}), the requirement that $[\cC^{op},\Set]$ and $[\cD^{op},\Set]$
   are monoidal closed can be achieved by requiring $\cC$ and $\cD$ to be
   monoidal. The requirement that the adjunctions $L \dashv \Phi^{*}$ and
   $F_1 \dashv \Psi^{*}$ are monoidal is directly related to the fact
   that the functors $\Psi$ and $\Phi$ are strong monoidal. More precisely,
   this implies that the left Kan extension is a strong monoidal
   functor~\cite{B.Day-Street95} which in turn determines the enrichment of the adjunction~\cite{KELLY74}.
   We also note that the category $\cB$ must be cartesian.

    \item One important complication with the model, as discussed so far, is
   the following. The Yoneda embedding $Y : \cD\rightarrow [\cD^{op},\Set]$ is full
   and faithful, and by Day's result, also preserves the monoidal
   structure $\otimes$. Therefore, if one takes $\cD=\QQ$, all but one of
   the conditions of a concrete model (from Definition~\ref{DEF MODEL OF QUANTUM LAMBDA CALCULUS}) are
   automatically satisfied. Unfortunately, however, the Yoneda embedding does
   not preserve coproducts, and therefore the remaining condition of
   Definition~\ref{DEF MODEL OF QUANTUM LAMBDA CALCULUS} fails. For this reason, we modify the construction
   and use a modified presheaf category with a coproduct preserving
   Yoneda embedding.
   More specifically, we choose a set $\Gamma$ of cones in $\cD$, and use the theory of continuous functors by
   Lambek~\cite{Lambek66} and Freyd and Kelly~\cite{Freyd-Kelly 1972}
   to construct a reflective subcategory $[\QQ^{op},\Set]_{\Gamma}$ of
   $[\QQ^{op},\Set]$, such that the modified Yoneda embedding $\QQ \to
   [\QQ^{op},\Set]_{\Gamma}$ is coproduct preserving.
   Our adjunctions, and the
   associated Yoneda embeddings, now look like this:

      $$\xymatrix@=25pt{
[\cB^{op},\Set]\ar[rr]^{L \,\dashv\, \Phi^{*}}&& [\cC^{op},\Set]\ar[rr]^{F \,\dashv\, G}&&[\cD^{op},\Set]_{\Gamma}\\
\cB\ar[u]^{Y}\ar[rr]^{\Phi}&&\cC\ar[u]^{Y}\ar[rr]^{\Psi}&&\cD\ar[u]^{Y_{\Gamma}}
}$$

   The second pair of adjoint functors $F \dashv G$ is itself generated by the
   composition of two adjunctions:
   \[\xymatrix{
[\cC^{op},\Set]\ar@<1ex>[rr]^{F_1}&&[\cD^{op},\Set]\ar@<1ex>[rr]^{F_2}\ar@<1ex>[ll]^{\Psi^{*}}_{\bot}&&[\cD^{op},\Set]_{\Gamma}\ar@<1ex>[ll]^{G_2}_{\bot}}\]
   Here $\cD = \QQ$ and the pair of functors $F_2 \dashv G_2$ arises from the reflection of
   $[\QQ^{op},\Set]_{\Gamma}$ in $[\QQ^{op},\Set]$.
   The structure of the modified
   Yoneda embedding $\QQ \to [\QQ^{op},\Set]_{\Gamma}$ depends crucially on general
   properties of functor categories {\cite{Lambek66,Freyd-Kelly 1972}}. Full details are given in~\cite{Malherbe2010}.

   \quad To ensure that
   the reflection functor remains strongly monoidal, we will use
   Day's reflection theorem~\cite{B.Day72},
   which yields necessary conditions
   for the reflection to be strong monoidal, by inducing a monoidal
   structure from the category $[\QQ^{op},\Set]$ into its subcategory $[\QQ^{op},\Set]_{\Gamma}$.
   In particular, this induces a
   constraint on the choice of $\Gamma$: all the cones
   considered in $\Gamma$ must be preserved by the opposite functor of
   the tensor functor in $\cD$.

   \item Notice that the above adjunctions are examples of what in topos
   theory are called  {\em essential geometric} morphisms, in which
   both functors are left adjoint to some other two functors: $L \dashv
   \Phi^{*} \dashv \Phi_{*}$. Therefore, this shows that the comonad ``$!$''
   obtained will preserve finite coproducts.

   \item The condition for the comonad ``$!$'' to be idempotent turns out to
   depend on the fact that the functor $\Phi$ is full and faithful; see Section~\ref{idemp comonad}.

 \item In addition to requiring that ``$!$'' preserves coproducts, we
 also need ``$!$'' to preserve the tensor, i.e., to be strongly
 monoidal, as required in Definition~\ref{DEF MODEL OF QUANTUM LAMBDA CALCULUS}.
This property is unusual for models of intuitionistic linear logic and restricts the possible choices  for the category $\cC$. In brief, since the left Kan extension along $\Phi$ is a strong monoidal functor, we find a concrete condition on the category $\cC$ that is necessary to ensure that the property holds when we lift the functor $\Phi$ to the category of presheaves; see Section~\ref{A STRONGLY COMONAD}.

  \item
  Our next task is to translate these categorical properties to the Kleisli category.
We use the comparison Kleisli functor to pass from the framework we have already established to the Kleisli monoidal adjoint pair of functors. Also, at the same time, we shall find it convenient to characterize the  functor $H:\cD\rightarrow [\cC^{op},\Set]_T$ as a strong monoidal functor.  The above steps yield an abstract model of quantum
 computation, parameterized by $\cB\rightarrow \cC \rightarrow \cD$ and~$\Gamma$.

 \item Finally,  in Section~\ref{Q''CATEGORY AND FUNCTORS PSI AND PHI}, we will identify specific
   categories $\cB$, $\cC$, and $\cD$ that yield a concrete model of quantum
   computation. We let $\cD=\QQ$, the category of superoperators.
   We let $\cB$ be the category of
   finite sets.  Alas, identifying a suitable candidate for   $\cC$ is difficult.
For example,  two   requirements are that  $\cC$ must be affine monoidal and  must satisfy the condition of equation~(\ref{EQUATION MULTIPLICATIVE KERNEL}) in Section~\ref{A STRONGLY COMONAD} below.
   We construct such a  $\cC=\QQ''$
   related to the category $\QQ$ of superoperators with the help of some universal
   constructions.

\end{enumerate}
The above base category $\QQ''$ plays a central role in our construction.
While the higher-order structural properties of the quantum lambda calculus hold at the pure   functor category level,   the interpretation of concrete quantum operations takes place mostly at this base level.

Let us now discuss some details of the construction.

\subsection{Categorical models of linear logic on presheaf categories}\label{CAT MODELS OF LL}

The first  categorical models of linear logic were given by Seely~\cite{SEELY}.
The survey by Melli\`{e}s is an excellent introduction~\cite{MELLIES}. Current state-of-the-art
 definitions  are Bierman's definition of a linear category~\cite{BIERMAN95},
 simplified yet more  by Benton's definition of a linear-non-linear category (\cite{BENTON 1994}, cf. Remark~\ref{ADJUNCTION PRESENTATION} above). Benton proved the equivalence of these two notions {\cite{BENTON 1994,MELLIES}}.

\begin{definition}[Benton~\cite{BENTON 1994}]
\label{DEF L-N-L BENTON MODEL}\rm
A \textit{linear-non-linear category} consists of:
\begin{enumerate}
\item[(1)] a symmetric monoidal closed category $(\mathcal{C},\otimes ,I, \multimap )$,
\item[(2)] a category $(\mathcal{B},\times,1)$ with finite products,
\item[(3)] a symmetric monoidal adjunction:
$\xymatrix@1{
(\mathcal{B},\times,1)\ar@<1ex>[rr]^{(F,m)}&&(\mathcal{C},\otimes,I)\ar@<1ex>[ll]^{(G,n)}_{\bot}}.$
\end{enumerate}
\end{definition}

\begin{remark}
We use Kelly's characterization of monoidal adjunctions
to simplify condition $(3)$ in Definition~\ref{DEF L-N-L BENTON MODEL} above to:
\begin{enumerate}
\item[(3')] an adjunction:
$\xymatrix@1@C+=5em{
(\mathcal{B},\times,1)\ar@<1ex>[r]^{F}&(\mathcal{C},\otimes,I),\ar@<1ex>[l]^{G}_{\bot}}$
and there exist isomorphisms
$m_{A,B}:FA\otimes FB\rightarrow F(A\times B)$ and $m_I:I\rightarrow F(1)$,
making $(F,m_{A,B},m_I):(\mathcal{B},\times,1)\rightarrow (\mathcal{C},\otimes ,I) $ a strong symmetric monoidal functor.
\end{enumerate}
Details of this characterization can found in~\cite{MELLIES}.
\end{remark}

We can characterize Benton's linear-non-linear models (Definition~\ref{DEF L-N-L BENTON MODEL}) on presheaf categories using Day's monoidal structure~\cite{B.Day70}.
This is an application of monoidal enrichment of the Kan
 extension,  see~\cite{B.Day-Street95}. We use the following:

\begin{proposition}[Day-Street\cite{B.Day-Street95}]
Suppose we have a strong monoidal functor $\Phi:(\mathcal{A},\otimes,1)\rightarrow(\mathcal{B},\otimes,I)$ between two monoidal categories,
 i.e., we have natural isomorphisms
 $\Phi(a)\otimes\Phi(b)\cong\Phi(a\otimes b)$ and $I\cong \Phi(I)$. Consider the left Kan extension along $\Phi$ in the functor
category $[\cB^{op},\Set]$, where the copower is the cartesian product on sets:
$Lan_{\Phi}(F)=\int^{a}\mathcal{B}(-,\Phi(a))\times F(a).$
Then $Lan_{\Phi}$ is strong monoidal.
\end{proposition}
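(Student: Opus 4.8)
The plan is to produce the two structural isomorphisms making $Lan_{\Phi}$ strong monoidal for the Day convolution products — write these as $*$ on each presheaf category, with $(F*G)(a)=\int^{a_1,a_2}\mathcal{A}(a,a_1\otimes a_2)\times F(a_1)\times G(a_2)$ on $[\mathcal{A}^{op},\Set]$ and unit the representable $Y_{\mathcal{A}}(1)=\mathcal{A}(-,1)$ — and then to verify coherence. The unit isomorphism is immediate once we observe that $Lan_{\Phi}$ carries representables to representables: by the co-Yoneda lemma, $Lan_{\Phi}(\mathcal{A}(-,a_0))=\int^{a}\mathcal{B}(-,\Phi a)\times\mathcal{A}(a,a_0)\cong\mathcal{B}(-,\Phi a_0)$, i.e.\ $Lan_{\Phi}\circ Y_{\mathcal{A}}\cong Y_{\mathcal{B}}\circ\Phi$. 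Since $\Phi$ is strong monoidal, $\Phi(1)\cong I$, so $Lan_{\Phi}(Y_{\mathcal{A}}(1))\cong Y_{\mathcal{B}}(\Phi 1)\cong Y_{\mathcal{B}}(I)$, which is exactly the unit of $*$ on $[\mathcal{B}^{op},\Set]$.

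For the tensor isomorphism, the key is a coend calculation reducing both $Lan_{\Phi}(F)*Lan_{\Phi}(G)$ and $Lan_{\Phi}(F*G)$ to a common expression. Expanding the first, substituting the coend formulas for $*$ and for $Lan_{\Phi}$ and commuting coends (colimits commute with colimits, and $\times$ in $\Set$ is cocontinuous in each variable), the value at $b$ becomes $\int^{b_1,b_2,a_1,a_2}\mathcal{B}(b,b_1\otimes b_2)\times\mathcal{B}(b_1,\Phi a_1)\times\mathcal{B}(b_2,\Phi a_2)\times F(a_1)\times G(a_2)$; the two coends over the $\mathcal{B}$-variables are then evaluated by co-Yoneda, leaving $\int^{a_1,a_2}\mathcal{B}(b,\Phi a_1\otimes\Phi a_2)\times F(a_1)\times G(a_2)$. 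Expanding the second, the value at $b$ is $\int^{a,a_1,a_2}\mathcal{B}(b,\Phi a)\times\mathcal{A}(a,a_1\otimes a_2)\times F(a_1)\times G(a_2)$, and the coend over $a$ — co-Yoneda applied to the copresheaf $a\mapsto\mathcal{B}(b,\Phi a)$ on $\mathcal{A}$ — yields $\int^{a_1,a_2}\mathcal{B}(b,\Phi(a_1\otimes a_2))\times F(a_1)\times G(a_2)$. The strong monoidal isomorphism $\Phi a_1\otimes\Phi a_2\cong\Phi(a_1\otimes a_2)$ identifies the two, naturally in $F$, $G$, and $b$.

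It then remains to check that these isomorphisms satisfy the coherence axioms of a (symmetric) strong monoidal functor, i.e.\ compatibility with associators, unitors, and symmetries. Using the density of representables together with the cocontinuity of $*$ in each variable and of $Lan_{\Phi}$ (the latter being a left adjoint, namely to $\Phi^{*}$), each coherence diagram reduces to the corresponding diagram for $\Phi$ itself together with the monoidality of the Yoneda embeddings furnished by Day's construction. I expect this last reduction — tracking variances in the iterated coends and chasing the coherence diagrams down to the base categories — to be the main obstacle, though it is bookkeeping rather than a genuine conceptual difficulty. A slicker route that bypasses the explicit coherence check is to invoke the universal property of Day convolution: $([\mathcal{A}^{op},\Set],*)$ is the free monoidal cocompletion of $(\mathcal{A},\otimes,1)$, so any cocontinuous functor out of it acquires a canonical monoidal structure from its restriction along $Y_{\mathcal{A}}$; since $Lan_{\Phi}$ is cocontinuous and $Lan_{\Phi}\circ Y_{\mathcal{A}}\cong Y_{\mathcal{B}}\circ\Phi$ is a composite of strong monoidal functors, $Lan_{\Phi}$ is strong monoidal.
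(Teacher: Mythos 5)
Your proof is correct. Note that the paper itself offers no proof of this proposition: it is imported from Day--Street with a citation only, so there is no ``paper proof'' to compare against line by line. Your argument is the standard one: the coend computation is right (both reductions by co-Yoneda are applied with the correct variances, yielding $\int^{a_1,a_2}\mathcal{B}(b,\Phi a_1\otimes\Phi a_2)\times F(a_1)\times G(a_2)$ on one side and $\int^{a_1,a_2}\mathcal{B}(b,\Phi(a_1\otimes a_2))\times F(a_1)\times G(a_2)$ on the other, glued by the monoidal constraint of $\Phi$), and the reduction of the coherence axioms to representables via cocontinuity is the usual device. The one point worth making explicit in the coherence step is that the isomorphism you built, when restricted along $Y_{\mathcal{A}}$, really is the one induced by $\Phi$'s monoidal constraint under $Lan_{\Phi}\circ Y_{\mathcal{A}}\cong Y_{\mathcal{B}}\circ\Phi$ --- that is what licenses the reduction --- but this is routine. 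Your closing shortcut via the universal property of Day convolution as the free monoidal cocompletion is precisely the Im--Kelly result (which appears in the paper's bibliography) and is the cleanest complete argument; either route suffices.
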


\begin{remark}\rm
If $\cA$ is cartesian then the Day tensor (convolution) $ [\mathcal{A}^{op},\Set]\times[\mathcal{A}^{op},\Set]\stt{\otimes_D}[\mathcal{A}^{op},\Set]$ is a pointwise product of functors. Also
if the unit of a monoidal category $\cC$ is a terminal object then the unit of $\otimes_D$ is also terminal.
\end{remark}

\subsection{Idempotent comonad in the functor category}\label{idemp comonad}
A comonad $({!},\epsilon,\delta)$ is said to be {\em idempotent} if $\delta:{!}\Rightarrow {!!}$ is an isomorphism.
Let $(!,\epsilon,\delta)$ be the comonad generated by an adjunction
$\xymatrix@1{(\mathfrak{B},\times ,1)\ar@<1ex>[r]^<>(.5){F}& (\mathfrak{C},\otimes,I). \ar@<1ex>[l]^<>(.5){G}_<>(.5){\bot}}$
Then $\delta=F\eta_G$ with $\eta:I\rightarrow GF$.
Thus if $\eta$ is an isomorphism then $\delta$ is also an isomorphism. In the context of our model construction, how can we
guarantee that $\eta$ is an isomorphism? Consider the unit
$\eta_{B}:B\Rightarrow \Phi^{*}(Lan_{\Phi}(B))$ of the adjunction generated by the Kan extension:
$$\xymatrix@1{[\cB^{op},\Set]\ar@<1ex>[r]^<>(.5){Lan_{\Phi}}& [\cC^{op},\Set]. \ar@<1ex>[l]^<>(.5){\Phi^{*}}_<>(.5){\bot}}$$

\begin{proposition}[\cite{Borceux94}]\label{F FULL AND FAITHFUL ETA ISO}
If $\Phi$ is full and faithful then $\eta_B:B\Rightarrow \Phi^{*}(Lan_{\Phi}(B))$ is an isomorphism.
\end{proposition}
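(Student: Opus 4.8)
The plan is to show that the component $\eta_B$ of the unit of the adjunction $Lan_\Phi \dashv \Phi^*$ is an isomorphism whenever $\Phi$ is full and faithful, by computing both sides pointwise and exhibiting the unit as the identity under the coend formula. Recall that for $B \in [\cB^{op},\Set]$ and $c \in \cC$ we have $Lan_\Phi(B)(c) = \int^{a} \cC(c,\Phi a) \times B(a)$, so that $\Phi^*(Lan_\Phi(B))(b) = Lan_\Phi(B)(\Phi b) = \int^{a} \cC(\Phi b,\Phi a) \times B(a)$. The first step is to observe that, since $\Phi$ is full and faithful, the map $\Phi_{b,a} : \cB(b,a) \to \cC(\Phi b,\Phi a)$ is a bijection natural in $b$ and $a$. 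Substituting this bijection into the coend yields a natural isomorphism $\int^{a} \cC(\Phi b,\Phi a)\times B(a) \cong \int^{a} \cB(b,a)\times B(a)$.

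The second step is to invoke the co-Yoneda lemma (the ``density formula''), which states that $\int^{a} \cB(b,a)\times B(a) \cong B(b)$ naturally in $b$; concretely, this coend is the colimit of the representables-times-$B$ diagram and collapses to $B(b)$ because $B$ is a colimit of representables weighted by itself. Composing the two isomorphisms gives $\Phi^*(Lan_\Phi(B))(b) \cong B(b)$, naturally in $b$, hence a natural isomorphism $\Phi^*(Lan_\Phi(B)) \cong B$ in the functor category $[\cB^{op},\Set]$.

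The remaining, and slightly delicate, step is to check that the isomorphism just constructed is exactly the unit $\eta_B$, rather than merely \emph{some} isomorphism between these two objects. For this I would trace through the definition of $\eta_B$: it is the transpose of $\mathrm{id}_{Lan_\Phi(B)}$ under the adjunction, and unwinding the universal property of the left Kan extension (as a coend) shows that $\eta_B$ is precisely the map that, at each $b$, sends $x \in B(b)$ to the class of $(\mathrm{id}_{\Phi b}, x)$ in $\int^{a}\cC(\Phi b,\Phi a)\times B(a)$. Under the identification of $\cC(\Phi b,\Phi a)$ with $\cB(b,a)$ this is the class of $(\mathrm{id}_b,x)$, which is exactly the component of the co-Yoneda isomorphism. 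Hence $\eta_B$ coincides with the composite isomorphism above.

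I expect the only real subtlety to be this last bookkeeping step — verifying that the abstractly-constructed iso is the canonical unit and that all the identifications are natural in $B$ as well as in $b$ — since the pointwise computation itself is an immediate consequence of fullness, faithfulness, and the co-Yoneda lemma. Alternatively, one may avoid the hands-on coend manipulation entirely and appeal to the standard fact that for any fully faithful $\Phi$ the left Kan extension $Lan_\Phi$ along $\Phi$ is itself fully faithful (equivalently, that $\Phi^* Lan_\Phi \cong \mathrm{Id}$ with the comparison being the unit), which is exactly the cited result of Borceux; the argument above is simply a proof of that fact specialized to $\Set$-valued presheaves.
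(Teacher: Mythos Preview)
Your argument is correct: the coend formula for $Lan_\Phi$, the use of full faithfulness to replace $\cC(\Phi b,\Phi a)$ by $\cB(b,a)$, the co-Yoneda lemma, and the verification that the resulting isomorphism coincides with the unit $\eta_B$ are all sound. The last step is indeed the only point requiring care, and you have handled it properly by tracing $\eta_B$ back to the universal property of the coend.

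That said, there is nothing to compare against: the paper does not give a proof of this proposition at all. It is stated with a bare citation to Borceux~\cite{Borceux94} and used as a black box, so the paper's ``proof'' is simply an appeal to the literature. What you have written is a self-contained proof of the cited fact, specialized to $\Set$-valued presheaves; your closing remark already identifies it as exactly the standard result that $Lan_\Phi$ is fully faithful (equivalently, that the unit of $Lan_\Phi \dashv \Phi^*$ is invertible) whenever $\Phi$ is. So your proposal is strictly more than the paper provides, and there is no discrepancy in approach to discuss.
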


\subsection{A strong comonad}
\label{A STRONGLY COMONAD}
In this section we study conditions that force the idempotent comonad above to be a strong monoidal functor. This property is part of the model we are building and is one of the main differences with previous models of  intuitionistic linear logic \cite{MELLIES}.

To achieve this, consider a fully faithful functor
$\Phi:\cB\to\cC$, as in Section~\ref{OUTLINE OF THE PROCEDURE}.
Let $[\cC^{op},\Set]\stackrel{\Phi^{*}}\longrightarrow[\cB^{op},\Set] $
be the precomposition functor, i.e., the right adjoint of the left Kan extension.
 \begin{lemma}[\cite{B.Day06}]\label{MULTIPLICATIVE KERNEL CONDITIONS}
If there exists a natural isomorphism
\begin{equation}
\label{EQUATION MULTIPLICATIVE KERNEL}
\cC(\Phi(b),c)\times\cC(\Phi(b),c')\cong\cC(\Phi(b),c\otimes c'),
\end{equation}
where $b\in \cB$ and $c,c'\in\cC$ and $\Phi$ is a fully faithful
functor satisfying $\Phi(1)=I$,
then $\Phi^{*}$ is a strong monoidal functor.
\end{lemma}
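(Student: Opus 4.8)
The plan is to put Day's convolution monoidal structure on both presheaf categories and to show that the hypotheses force the canonical lax comparison maps of $\Phi^{*}$ to be invertible. First recall that, since $\cB$ is cartesian and $\Phi$ is strong monoidal (so $\Phi(b_1\times b_2)\cong\Phi b_1\otimes\Phi b_2$ and $\Phi(1)=I$), the left Kan extension $Lan_{\Phi}$ is strong monoidal by Day--Street, hence its right adjoint $\Phi^{*}$ is \emph{lax} monoidal by doctrinal adjunction, with structure maps the mates of those of $Lan_{\Phi}$. So it suffices to prove these mates are isomorphisms. Note that full faithfulness of $\Phi$ only makes the \emph{unit} of $Lan_{\Phi}\dashv\Phi^{*}$ invertible (Proposition~\ref{F FULL AND FAITHFUL ETA ISO}), not the counit; this is precisely where hypothesis~(\ref{EQUATION MULTIPLICATIVE KERNEL}) must enter.

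For the binary comparison I would compute pointwise. For $b\in\cB$,
$$\Phi^{*}(F\otimes_D G)(b)=(F\otimes_D G)(\Phi b)=\int^{c_1,c_2}\cC(\Phi b,\,c_1\otimes c_2)\times Fc_1\times Gc_2 .$$
Applying~(\ref{EQUATION MULTIPLICATIVE KERNEL}) inside the coend replaces $\cC(\Phi b,c_1\otimes c_2)$ by $\cC(\Phi b,c_1)\times\cC(\Phi b,c_2)$; since $A\times(-):\Set\to\Set$ preserves colimits, the double coend factors as
$$\Big(\int^{c_1}\cC(\Phi b,c_1)\times Fc_1\Big)\times\Big(\int^{c_2}\cC(\Phi b,c_2)\times Gc_2\Big)\cong F(\Phi b)\times G(\Phi b)$$
by the co-Yoneda (density) lemma. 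As $\cB$ is cartesian, $\otimes_D$ on $[\cB^{op},\Set]$ is the pointwise product, so the right-hand side is exactly $(\Phi^{*}F\otimes_D\Phi^{*}G)(b)$; naturality in $b$, $F$, and $G$ follows from naturality of the density isomorphism and of~(\ref{EQUATION MULTIPLICATIVE KERNEL}). For the unit, the Day unit of $[\cC^{op},\Set]$ is the representable $\cC(-,I)$, and
$$\Phi^{*}(\cC(-,I))(b)=\cC(\Phi b,I)=\cC(\Phi b,\Phi 1)\cong\cB(b,1)\cong\{*\},$$
using $\Phi(1)=I$, full faithfulness of $\Phi$, and terminality of $1$ in $\cB$; this is the terminal presheaf, which is the Day unit of $[\cB^{op},\Set]$.

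It then remains to check that these pointwise isomorphisms are indeed the canonical lax comparison maps of $\Phi^{*}$ — equivalently, that under the density isomorphism the iso $\cC(\Phi b,c_1)\times\cC(\Phi b,c_2)\cong\cC(\Phi b,c_1\otimes c_2)$ of~(\ref{EQUATION MULTIPLICATIVE KERNEL}) coincides with the canonical map built from the diagonal $b\to b\times b$ in $\cB$ and the strong monoidal structure of $\Phi$ — and to verify the associativity and unit coherence diagrams of a strong monoidal functor. This bookkeeping, not any conceptual difficulty, is the main obstacle: one must unwind the mates carefully and use the coherence of Day convolution together with the (assumed) naturality of~(\ref{EQUATION MULTIPLICATIVE KERNEL}) in all three variables to pin down the comparison and close the coherence squares. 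Alternatively, one can avoid the mate identification altogether by taking the displayed pointwise isomorphisms as the \emph{definition} of the monoidal structure on $\Phi^{*}$ and checking coherence directly from the density formula, at the cost of a somewhat longer computation.
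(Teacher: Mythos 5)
Your argument is correct and is the standard proof of this fact; the paper itself states the lemma without proof, citing Day, so there is no in-text argument to diverge from. Your coend computation (hypothesis~(\ref{EQUATION MULTIPLICATIVE KERNEL}) plus Fubini, the fact that $\times$ preserves colimits in each variable in $\Set$, and density, together with $\cC(\Phi b,I)\cong\cB(b,1)\cong\{*\}$ for the unit) is exactly what is needed, and you correctly flag the only real remaining work: checking that these pointwise isomorphisms agree with the mates of the strong monoidal structure on $Lan_{\Phi}$, which is what the paper actually uses when it asks the comonad $!=Lan_{\Phi}\circ\Phi^{*}$ to be strong monoidal.
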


In Section~\ref{Q double prime AND CATEGORY AND FUNCTOR PSI} we shall build a category satisfying this specific requirement among others. More precisely, from our viewpoint, this will depend on the construction of a certain category that we will name $\QQ''$, which is a modification of the category $\QQ$ of superoperators. Also, we will consider a fully faithful strong monoidal functor  $\Phi:(\FinSet,\times,1)\rightarrow (\cC,\otimes_{\cC},I)$ that generates the first adjunction in Section~\ref{OUTLINE OF THE PROCEDURE}, where $\cC=\QQ''$.

\subsection{The functor $H:\cD\rightarrow \hat{\cC}_{T}$}\label{COPRODUCT INDUCED KLEISLI}
Let $\cC$ and $\cD$ be categories.
Consider an adjoint pair of functors
$\xymatrix@1{
[\cC^{op},\Set]\ar@<1ex>[r]^<>(.5){F}&[\cD^{op},\Set]_{\Gamma},\ar@<1ex>[l]^<>(.5){G}_<>(.5){\bot}}$ as mentioned in Section \ref{OUTLINE OF THE PROCEDURE}, item $3$.
Let  $T=G\circ F$ and $\hat\cC=[\cC^{op},\Set]$.
 In this section we consider the construction of a coproduct preserving and tensor preserving functor $H:\cD\rightarrow \hat{\cC}_{T}$ with properties similar to the Yoneda embedding, for a
 general category $\cD$.

Let  $F_{1}\dashv G_{1}$ and $F_{2}\dashv G_{2}$ be two monoidal adjoint pairs with associated natural transformations $(F_{1},m_{1})$, $(G_{1},n_{1})$ and $(F_{2},m_{2})$, $(G_{2},n_{2})$.
We shall use the following notation: $F=F_2\circ F_1$, $G=G_1\circ G_2$, and $T=G\circ F$.
We now describe a typical situation of this kind generated by a functor $\Psi:\cC\rightarrow\cD$.

Let us consider $F_1=Lan_{\Psi}$ and $G_1=\Psi^{*}$. With some co-completeness condition assumed, we can express
$F_1(A)=\int^{c}\cD(-,\Psi(c))\otimes A(c)$ and $G_1=\Psi^{*}$.

On the other hand we consider
$F_2=Lan_{Y}(Y_{\Gamma}):[\cD^{op},\Set]\rightarrow[\cD^{op},\Set]_{\Gamma}$, where
$Y_{\Gamma}:\cD\rightarrow [\cD^{op},\Set]_{\Gamma}$ is the co-restriction of the Yoneda functor given by $Y_{\Gamma}(d)=\cD(-,d)$. Thus we have
$F_2(D)=\int^{d}D(d)\otimes Y_{\Gamma}(d)$. Assuming that $[\cD^{op},\Set]_{\Gamma}$ is co-complete and contains the representable presheaves, then the right adjoint $G_2$ is isomorphic to the inclusion functor.

\vspace{-1ex}
 \subsubsection{Definition of $H$.}\label{CANONICAL CHOICE} \

\vspace{1ex}

\noindent
We want to study the following situation:

$$\xymatrix@=25pt{
&\hat{\cC}\ar@<1ex>[dd]^{F_{T}}\ar@<1ex>[rr]^{F_{1}}&&\hat{\cD}\ar@<1ex>[ll]^{G_{1}}_{\bot}\ar@<1ex>[rr]^{F_{2}}&&\hat{\cD}_{\Gamma}\ar@<1ex>[ll]^{G_{2}}_{\bot} \\
 &&&&&\\
 \cC\ar@/_/[rd]_{\Psi}\ar@/^/[ruu]^{Y}&\hat{\cC}_{T}\ar@<1ex>[uu]^{G_{T}}_{\vdash}\ar[rrrruu]_{C}&&&&&\\
 &\cD\ar@{-->}[u]_{H}\ar@/_/[rrrruuu]_{Y_{\Gamma}}&&&&&
}$$

The goal is to determine a full and faithful functor, denoted $H$ in this diagram, that preserves tensor and coproduct.

First, notice that the perimeter of this diagram
commutes on objects:
$F_1(\cC(-,c))=$
$\int^{c'}\cD(-,\Psi(c'))\otimes \cC(c',c)=\cD(-,\Psi(c))$
and when we evaluate again, using $F_2$, we obtain:
$$F_2(\cD(-,\Psi(c)))=\int^{d'}\cD(d',\Psi(c))\otimes Y_{\Gamma}(d')=Y_{\Gamma}(\Psi(c))=\cD(-,\Psi(c)).$$
Summing up, we have that $F(\cC(-,c))=\cD(-,\Psi(c))$ up to isomorphism.

Suppose now that $\Psi$ is essentially onto on objects and we have that:
$$\cD(-,d)\cong\cD(-,\Psi(c))$$
for some $c\in\cC$, i.e.,
 we can make a choice, for every
 $d\in |\cD|$, of some $c\in |\cC|$ such that $\Psi(c)\cong d$. Let us
 call this choice a ``choice of preimages''. We can therefore define
 a map $H:|\cD|\rightarrow |\hat{\cC}_{T}|$ by $H(d)=\cC(-,c)$ on objects.

Then we can define a functor $H:\cD\rightarrow \hat{\cC}_{T}$ in the following way:
 let $d\stackrel {f}\rightarrow d'$ be an arrow in the category $\cD$. We apply $Y_{\Gamma}$ obtaining $\cD(-,d)\stt{Y_{\Gamma}(f)}\cD(-,d')$. This arrow is equal to $\cD(-,\Psi(c))\stt{Y_{\Gamma}(f)}\cD(-,\Psi(c'))$ for some $c,c'\in\cC$, and for the reason stipulated above is equal to  $F(\cC(-,c))\stackrel {Y_{\Gamma}(f)}\longrightarrow F(\cC(-,c'))$. Now we use the fact that the comparison functor
$C:\hat{\cC}_{T}\rightarrow \hat{\cD}_{\Gamma}$, i.e.,
$$C:\hat{\cC}_{T}(\cC(-,c),\cC(-,c'))\rightarrow \hat{\cD}_{\Gamma}(F(\cC(-,c)),F(\cC(-,c'))),$$
is full and faithful.  Thus there is a unique $\gamma:\cC(-,c)\rightarrow\cC(-,c')$ such that $C(\gamma)=Y_{\Gamma}(f)$. Then we can define $H(f) =\gamma$ on morphisms and (as mentioned above) $H(d)=\cC(-,c)$ on objects, where $c$ is given by our choice of preimages.\\

\noindent
{\bf $C:\hat{\cC}_{T}\rightarrow\hat{\cD}_{\Gamma}$ is a strong monoidal functor}\\

\noindent
We define $C(A)\otimes_{\hat{\cD}_{\Gamma}}C(B)\stackrel {u_{AB}}\longrightarrow C(A\otimes_{\cC_{T}}B)$ as
$F(A)\otimes_{\hat{\cD}_{\Gamma}}F(B)\stackrel {m_{AB}}\longrightarrow F(A\otimes B)$. It is easy to check naturality.
Also define $I\stackrel {u_{I}=m_{I}}\longrightarrow C(I)=F(I)$.
Since $m_{AB}$ and $m_{I}$ are invertible in $\hat{\cD}_{\Gamma}$, we have that $u_{AB}$ and $u_{I}$ are invertible. This implies that $(C,m)$ is a strong functor.
Also, coherence of isomorphisms is easily checked.

 \vspace{1ex}

\noindent
{\bf $H: {\cD}\rightarrow\hat{\cC}_{T}$ is a strong monoidal functor}

\noindent
We want to define a natural transformation $H(A)\otimes_{\hat{\cC}_{T}} H(B)\stackrel {\psi_{A,B}}\longrightarrow H(A\otimes_{\cD} B)$ that makes $H$ into a strong monoidal functor.

We begin by recalling that $(C,u)$ and $(Y_{\Gamma},y)$ are strong monoidal, i.e., $u$ and $y$ are isomorphisms. Since $C$ is fully faithful, this allows us to define $\psi_{A,B}$ as the unique map making the following diagram commute:
$$\xymatrix@=25pt{
 Y_{\Gamma}(A)\otimes Y_{\Gamma}(B)\ar[rr]^{y_{A,B}}\ar[dr]^{u_{HA,HB}}&&Y_{\Gamma}(A\otimes B)=C\circ H(A\otimes B).\\
 &C(H(A)\otimes H(B))\ar[ru]^{C(\psi_{A,B})}&
}$$
We define $\psi_{I}$ similarly.
Furthermore, $\psi$ is a natural transformation and satisfies all the axioms of a monoidal structure. We refer to~\cite{Malherbe2010} for the details.
\vspace{1ex}

\noindent
 {\bf $H$ preserves coproducts}

 \vspace{1ex}

\noindent
 Here we focus on the specific problem of the preservation of finite coproducts by the functor $H$ defined in Section~\ref{CANONICAL CHOICE}.
First,  note that the category $[\cC^{op},\Set]$ has finite coproducts, computed pointwise.
Moreover, the Kleisli category $\hat{\cC}_{T}$ inherits the coproduct structure from $\hat{\cC}$ since:
 \begin{proposition}
 If $\cC$ has finite coproducts, then so does $\cC_{T}$.
\end{proposition}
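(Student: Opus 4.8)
The plan is to show that every finite coproduct of $\cC$ is again a coproduct in the Kleisli category $\cC_T$, on the very same underlying object. Recall that $\cC_T$ has the same objects as $\cC$; that a morphism $A\to B$ of $\cC_T$ is by definition a morphism $A\to TB$ of $\cC$; and that there is the Kleisli adjunction $F_T\dashv G_T$ between $\cC$ and $\cC_T$, in which the free functor $F_T:\cC\to\cC_T$ is the identity on objects and sends $f:A\to B$ to $\eta_B\circ f$. The key observation is that $F_T$, being a left adjoint, preserves all colimits that exist in $\cC$. Hence it takes a finite-coproduct cocone $(\iota_i:A_i\to\coprod_i A_i)_i$ of $\cC$ to a coproduct cocone in $\cC_T$; and since $F_T$ is bijective on objects, that cocone has apex $\coprod_i A_i$ and legs $F_T(\iota_i)=\eta\circ\iota_i$. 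So $\cC_T$ has finite coproducts, and they are computed exactly as in $\cC$.

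I would also spell out the underlying elementary fact, since it gets used repeatedly in the sequel: for a finite family $(A_i)_{i\in I}$ of objects and any object $B$,
\[
  \cC_T\!\left(\coprod_{i}A_i,\,B\right)=\cC\!\left(\coprod_{i}A_i,\,TB\right)\;\cong\;\prod_{i}\cC(A_i,TB)=\prod_{i}\cC_T(A_i,B),
\]
where the middle bijection is the universal property of $\coprod_i A_i$ in $\cC$. One then checks that under this bijection a Kleisli morphism out of $\coprod_i A_i$ corresponds to the family of its precompositions with the $\eta\circ\iota_i$, and that the bijection is natural in $B$ with respect to Kleisli composition; both verifications are short diagram chases using the monad unit and multiplication laws. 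The instance $I=\emptyset$ records that the initial object $0$ of $\cC$ remains initial in $\cC_T$, since $\cC_T(0,B)=\cC(0,TB)$ is a one-element set, so ``finite coproducts'' is covered in the strict sense, including the empty one.

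I do not expect any genuine obstacle here: this is the familiar fact that the Kleisli inclusion, being a left adjoint which is moreover the identity on objects, transports colimits of $\cC$ verbatim into $\cC_T$. The only point requiring a little care is the bookkeeping, namely confirming that the candidate coprojections and the copairing operation interact correctly with Kleisli composition, but this is entirely routine.
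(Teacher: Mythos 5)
Your proof is correct, and it is the standard argument: the paper states this proposition without proof (deferring details to~\cite{Malherbe2010}), and the expected justification is exactly yours, namely that the identity-on-objects left adjoint $F_T:\cC\to\cC_T$ preserves colimits and hence transports finite coproduct cocones (including the empty one) verbatim into the Kleisli category. Your explicit hom-set isomorphism and the check that Kleisli precomposition with $\eta\circ\iota_i$ reduces to ordinary precomposition with $\iota_i$ are the right supporting details.
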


 Therefore, $[\cC^{op},\Set]_{T}$ has finite coproducts.
Recall that the comparison functor
$C:[\cC^{op},\Set]_{T}\rightarrow[\cD^{op},\Set]_{\Gamma}$ is fully faithful.
Also, by a well known property of representable functors (see~\cite{Lambek66}), we have that
$H:\cD\rightarrow [\cC^{op},\Set]_{T}$ preserves coproducts iff $[\cC^{op},\Set]_{T}(H-,A):\cD^{op}\rightarrow\Set$ preserves products for every $A\in [\cC^{op},\Set]_{T}$. Using these two facts we prove the following:
\begin{proposition}
If the class $\Gamma$ contains all the finite product cones, then $H$ preserves finite coproducts.
\end{proposition}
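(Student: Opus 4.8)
The plan is to use the reformulation recalled just above the statement: $H$ preserves finite coproducts if and only if, for every $A\in[\cC^{op},\Set]_T$, the functor $[\cC^{op},\Set]_T(H-,A):\cD^{op}\to\Set$ preserves finite products. So I would fix $A$ and compute this functor explicitly.

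The key observation is that $C\circ H$ is, up to isomorphism and after composing with the inclusion into $[\cD^{op},\Set]$, the ordinary Yoneda embedding of $\cD$. Indeed, by the construction of $H$ in Section~\ref{CANONICAL CHOICE} we have $C(H(d))=F(\cC(-,c))\cong\cD(-,\Psi(c))\cong\cD(-,d)=Y_\Gamma(d)$ on objects, and $C(H(f))=Y_\Gamma(f)$ on morphisms by the very definition of $H(f)$; composing with the full inclusion $[\cD^{op},\Set]_\Gamma\hookrightarrow[\cD^{op},\Set]$ (which is full and faithful since $[\cD^{op},\Set]_\Gamma$ is reflective, with $G_2$ the inclusion) therefore exhibits $C\circ H$ as naturally isomorphic to $Y:\cD\to[\cD^{op},\Set]$. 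Since $C$ is full and faithful, and $[\cD^{op},\Set]_\Gamma$ is a full subcategory containing the representables, I would then chain the natural isomorphisms
\[
[\cC^{op},\Set]_T(H(d),A)\;\cong\;[\cD^{op},\Set]_\Gamma\big(C(H(d)),CA\big)\;\cong\;[\cD^{op},\Set]\big(\cD(-,d),CA\big)\;\cong\;(CA)(d),
\]
the last step by the Yoneda lemma. Hence $[\cC^{op},\Set]_T(H-,A)\cong CA$ as functors $\cD^{op}\to\Set$.

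It then remains to see that $CA:\cD^{op}\to\Set$ preserves finite products. But $CA$ lies in $[\cD^{op},\Set]_\Gamma$, i.e.\ it is $\Gamma$-continuous in the sense of Lambek~\cite{Lambek66} and Freyd--Kelly~\cite{Freyd-Kelly 1972}; under the hypothesis that $\Gamma$ contains all finite product cones, $\Gamma$-continuity of $CA$ says precisely that $CA$ carries each finite product cone to a limit cone in $\Set$, which --- taking into account the variance of a contravariant presheaf --- is exactly the statement that $CA$ turns finite coproducts of $\cD$ into finite products of $\Set$. Therefore $[\cC^{op},\Set]_T(H-,A)$ preserves finite products for every $A$, and the representable-functor criterion recalled above yields that $H$ preserves finite coproducts. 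The nullary case is subsumed: $CA(0)$ is a one-element set for every $A$, matching $[\cC^{op},\Set]_T(0,A)$, so $H(0)$ is initial.

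The step I expect to require the most care is the variance bookkeeping: one must check that ``finite product cone'' in the specification of $\Gamma$ corresponds, through the contravariant presheaf $CA$, exactly to the preservation of finite coproducts of $\cD$, and that the Freyd--Kelly reflective subcategory $[\cD^{op},\Set]_\Gamma$ is genuinely a \emph{full} subcategory so that the Hom-set computation above is legitimate. Both are built into the setup of Section~\ref{OUTLINE OF THE PROCEDURE} (item 3 and the identification of $G_2$ with the inclusion), and the earlier lemma guarantees that the relevant finite coproducts exist in $\cD=\QQ$; granting these, the rest is a routine diagram chase.
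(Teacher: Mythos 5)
Your argument is correct and follows exactly the route the paper sketches: the representable-functor criterion plus full faithfulness of the comparison functor $C$, the identification $C\circ H\cong Y_\Gamma$, the Yoneda computation $[\cC^{op},\Set]_T(H-,A)\cong CA$, and $\Gamma$-continuity of $CA$ when $\Gamma$ contains all finite product cones. The paper defers these details to the first author's thesis, but your write-up is a faithful and complete expansion of the intended proof, including the variance bookkeeping and the nullary case.
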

We refer to~\cite{Malherbe2010} for the details.
From this, we impose that $\Gamma$ contains all the finite product cones. This is another requirement to obtain a model.

\subsection{$F_{T}\dashv G_{T}$ is a monoidal adjunction}
We recall how a monoidal adjoint pair $(F,m)\dashv
 (G,n)$ induces a monoidal structure for the adjunction $F_{T}\dashv
 G_{T}$ associated with the Kleisli construction.

 \begin{lemma}
   Let $F\dashv G$ be a monoidal adjunction, let $T=GF$, and consider
   the Kleisli adjunction $\xymatrix@1{\mathcal{C}\ar@<1ex>[r]^{F_{T}}&
     \mathcal{C}_{T}\ar@<1ex>[l]^{G_{T}}_{\bot}}$ generated by this adjunction.
   Then $\cC_{T}$ is a monoidal
   category and $F_{T}\dashv G_{T}$ is a monoidal adjunction.
 \end{lemma}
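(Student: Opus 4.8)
The plan is to transport the monoidal structure from $\cC$ along the Kleisli left adjoint $F_T$ and check that the standard Kleisli data assemble into a monoidal category in such a way that $F_T$ becomes strong monoidal, $G_T$ lax monoidal, and the unit/counit of the Kleisli adjunction are monoidal natural transformations. Concretely, I would define the tensor on $\cC_T$ on objects by $A\otimes_T B := A\otimes B$ (the same objects as $\cC$), and on morphisms $f:A\to TB$, $g:A'\to TB'$ by first forming $f\otimes g: A\otimes A'\to TB\otimes TB'$ in $\cC$, then postcomposing with the lax structure map $n_{B,B'}:TB\otimes TB'\to T(B\otimes B')$ of the composite monad $T=GF$ (which is the standard comonoidal-to-monad transfer: $n$ is built from the lax structure $n$ of $G$ and the colax/strong structure of $F$, here using that $F$ is strong so $F B\otimes F B'\cong F(B\otimes B')$). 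The unit object is $I$, and the coherence isomorphisms $\alpha,\lambda,\rho,\sigma$ of $\cC_T$ are taken to be $F_T$ applied to those of $\cC$ (equivalently, the image under $\eta$ of the $\cC$-isomorphisms), so that their naturality in $\cC_T$ and the pentagon/triangle/hexagon follow from the corresponding facts in $\cC$ together with the monad laws and the coherence of $n$ with the monad multiplication $\mu$.

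The key steps, in order: (1) verify that $\otimes_T$ is a bifunctor on $\cC_T$ — i.e. it respects Kleisli identities and Kleisli composition; the composition case is exactly the interchange diagram relating $n_{B,B'}$, $\mu$, and the naturality of $n$, which is precisely the condition making $(T,n,n_I)$ a \emph{commutative/symmetric monoidal monad} (Kock), and the excerpt has already packaged the hypotheses as "$(F,m)\dashv(G,n)$ is a monoidal adjunction", so this commutes. (2) Check the coherence axioms for $(\cC_T,\otimes_T,I)$; each reduces under $\eta$ to the coherence axioms in $\cC$ plus one application of a monad law. (3) Show $F_T$ is strong monoidal: since $F_T$ is identity on objects and sends $h:A\to B$ to $\eta_B\circ h$, the structure maps are $\eta$ of identities, which are isomorphisms, and strength/coherence are immediate. (4) Show $G_T$ is lax monoidal with structure map built from $n$ and the comultiplication-free transfer, and that the triangle identities of the Kleisli adjunction are compatible with these monoidal structures, i.e. the unit $\eta:\mathrm{Id}\Rightarrow G_TF_T$ and counit $\varepsilon:F_TG_T\Rightarrow\mathrm{Id}$ are monoidal natural transformations. (5) Conclude that $F_T\dashv G_T$ is a monoidal adjunction in the sense of the paper (equivalently, invoke Kelly's characterization: an adjunction with $F_T$ strong monoidal is automatically a monoidal adjunction, which collapses steps (4)).

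The main obstacle is step (1)/(2): making sure the monad $T=GF$ really carries a \emph{symmetric monoidal monad} structure for which $\mu$ is compatible with $n$ — i.e. that the lax structure on $G$ composed with the strong structure on $F$ yields $n:TA\otimes TB\to T(A\otimes B)$ satisfying the two "monad vs. monoidal" coherence squares (one with $\eta$, one with $\mu$) and the symmetry square. This is classical (Kock~\cite{KOCK70,KOCK72}), but it is the only place where one genuinely uses that the adjunction $F\dashv G$ is \emph{monoidal} rather than merely an adjunction between monoidal categories; once $(T,\eta,\mu,n,n_I)$ is known to be a symmetric monoidal monad, everything else is bookkeeping. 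I would therefore cite Kock for the fact that a monoidal adjunction induces a symmetric monoidal monad on $\cC$, and that the Kleisli category of a symmetric monoidal monad is symmetric monoidal with $F_T$ strong monoidal, and then only sketch the verification that this matches the paper's conventions. For the details I would refer to~\cite{Malherbe2010}.
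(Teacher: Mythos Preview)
Your proposal is correct and follows the same route as the paper: the paper's proof is simply the two-line observation that a monoidal adjunction $F\dashv G$ yields a (symmetric) monoidal monad $T=GF$ (Kock), and that general properties of monoidal monads and monoidal adjunctions then give the monoidal structure on $\cC_T$ together with the monoidal Kleisli adjunction $F_T\dashv G_T$. You have unpacked exactly these ``general properties'' in steps (1)--(5), including the use of Kelly's characterization to collapse step~(4); the paper leaves all of this implicit and defers details to~\cite{Malherbe2010}.
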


 \begin{proof}
   Since $F\dashv G$ is a monoidal adjunction, it follows that $T=GF$
   is a monoidal monad. The result then follows by general properties of monoidal monads and monoidal adjunctions.
 \end{proof}

\subsection{Abstract model of the quantum lambda calculus}\label{MAIN THEOREM OF CHAPTER 4}
Summing up the parts from previous sections, we have the following theorem.
\begin{theorem} \label{the theorem MAIN THEOREM OF CHAPTER 4}
Given categories $\cB$, $\cC$ and $\cD$, and functors $\cB\stt{\Phi}\cC\stt{\Psi}\cD$, satisfying
\begin{itemize}
\item $\cB$ has finite products, $\cC$ and $\cD$ are symmetric monoidal,
\item $\cB$, $\cC$, and $\cD$ have coproducts, which are distributive w.r.t. tensor,
\item $\cC$ is affine,
\item $\Phi$ and $\Psi$ are strong monoidal,
\item $\Phi$ and $\Psi$ preserve coproducts,
\item $\Phi$ is full and faithful,
\item $\Psi$ is essentially surjective on objects,
\item for every $b\in\cB$, $c,c'\in\cC$ we have
$\cC(\Phi(b),c)\times\cC(\Phi(b),c')\cong\cC(\Phi(b),c\otimes c').$
\end{itemize}
Let $\Gamma$ be any class of cones preserved by the opposite tensor
functor, including all
the finite product cones. Let $Lan_{\Phi}$, $\Phi^{*}$, $F$ and $G$ be defined as
 in Section~\ref{OUTLINE OF THE PROCEDURE} and subsequent sections. Then
 \[\xymatrix{
[\cB^{op},\Set]\ar@<1ex>[rr]^{Lan_{\Phi}}&&[\cC^{op},\Set]\ar@<1ex>[rr]^{F}\ar@<1ex>[ll]^{\Phi^{*}}_{\bot}&&[\cD^{op},\Set]_{\Gamma}\ar@<1ex>[ll]^{G}_{\bot}}\]

 \noindent
 forms an abstract model of the quantum lambda calculus.
\end{theorem}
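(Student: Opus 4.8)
The plan is to verify, in order, each clause of Definition~\ref{DEF MODEL OF QUANTUM LAMBDA CALCULUS}: we must exhibit an affine linear category for duplication with finite coproducts preserved by $!$, and check that the comonad is idempotent and strongly monoidal. We take $\cC$ in that definition to be $[\cC^{op},\Set]$ (the middle presheaf category) equipped with the comonad $!=Lan_\Phi\circ\Phi^*$ and the monad $T=G\circ F$. Via Remark~\ref{ADJUNCTION PRESENTATION}, it suffices to show that the displayed pair of adjunctions is a pair of monoidal adjunctions in which $[\cB^{op},\Set]$ is cartesian and $[\cC^{op},\Set]$, $[\cD^{op},\Set]_\Gamma$ are symmetric monoidal closed, and then to upgrade this to the \emph{affine} case with coproducts and the idempotency/strength of $!$.

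The steps, in order: (1) Since $\cB$ has finite products, the Day convolution on $[\cB^{op},\Set]$ is the pointwise (cartesian) product, so $[\cB^{op},\Set]$ is cartesian closed. Since $\cC$ and $\cD$ are symmetric monoidal, Day's theorem makes $[\cC^{op},\Set]$ and $[\cD^{op},\Set]$ symmetric monoidal closed. (2) The subcategory $[\cD^{op},\Set]_\Gamma$: because $\Gamma$ consists of cones preserved by the opposite tensor functor, Day's reflection theorem~\cite{B.Day72} equips $[\cD^{op},\Set]_\Gamma$ with a symmetric monoidal closed structure for which the reflection $F_2\dashv G_2$ is a (strong) monoidal adjunction. (3) Strength of the outer functors: since $\Phi$ and $\Psi$ are strong monoidal, the Day--Street proposition gives that $Lan_\Phi$ and $F_1=Lan_\Psi$ are strong monoidal; by Kelly's result on monoidal adjunctions this makes $Lan_\Phi\dashv\Phi^*$ and $F_1\dashv\Psi^*$ monoidal adjunctions. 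Composing $F_1\dashv\Psi^*$ with $F_2\dashv G_2$ yields the monoidal adjunction $F\dashv G$. This already gives a linear-non-linear model in the sense of Benton, hence a linear category for duplication, and $T=GF$ is a computational monad; Kleisli exponentials come from monoidal closedness of $[\cC^{op},\Set]$. (4) Affineness: by the remark following the Day--Street proposition, since $\cC$ is affine the unit of $\otimes_D$ on $[\cC^{op},\Set]$ is terminal, so the model is affine. (5) Coproducts: $[\cC^{op},\Set]$ has pointwise coproducts; $\Phi^*$ has a further right adjoint $\Phi_*$ (precomposition always does, by right Kan extension), so $!=Lan_\Phi\circ\Phi^*$ is a composite of left adjoints and preserves coproducts, and distributivity over tensor is inherited pointwise; moreover the hypotheses give coproducts in $\cB$ and $\cD$. (6) Idempotence of $!$: since $\Phi$ is full and faithful, Proposition~\ref{F FULL AND FAITHFUL ETA ISO} gives that the unit $\eta$ of $Lan_\Phi\dashv\Phi^*$ is an isomorphism, hence (Section~\ref{idemp comonad}) $\delta$ is an isomorphism and $!$ is idempotent. (7) Strong monoidality of $!$: here we invoke Lemma~\ref{MULTIPLICATIVE KERNEL CONDITIONS} using the last hypothesis (the multiplicative kernel isomorphism~(\ref{EQUATION MULTIPLICATIVE KERNEL}) together with $\Phi(1)=I$, which follows from $\Phi$ being strong monoidal and $\cB$ cartesian) to conclude $\Phi^*$ is strong monoidal; since $Lan_\Phi$ is already strong monoidal, so is $!=Lan_\Phi\circ\Phi^*$. (8) Finally, the comparison/$H$ machinery of Section~\ref{COPRODUCT INDUCED KLEISLI} is recorded for later use (building the concrete embedding $\QQ\hookrightarrow\cC_T$), but for the \emph{abstract} model it is enough to note that the requirement that $\Gamma$ contains all finite product cones guarantees $H$ preserves coproducts; together with essential surjectivity of $\Psi$ this keeps the construction consistent.

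Assembling these, $[\cC^{op},\Set]$ with $!$ and $T$ satisfies all clauses of Definition~\ref{DEF MODEL OF QUANTUM LAMBDA CALCULUS} for an abstract model, so the stated pair of adjunctions forms an abstract model of the quantum lambda calculus.

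I expect the main obstacle to be step (2) together with the interaction between steps (2) and (7): ensuring that passing to the reflective subcategory $[\cD^{op},\Set]_\Gamma$ does not destroy the monoidal structure, which forces the constraint on $\Gamma$ (cones preserved by the opposite tensor functor), and simultaneously checking that the \emph{composite} adjunction $F=F_2F_1\dashv G_1G_2=G$ is genuinely monoidal and that all the coherence data for the strong monoidal functors $Lan_\Phi$, $F$, and especially $!$ fit together. The bookkeeping for strength of $!$ via Lemma~\ref{MULTIPLICATIVE KERNEL CONDITIONS} is delicate because it relies on the nonstandard hypothesis~(\ref{EQUATION MULTIPLICATIVE KERNEL}); most other clauses follow fairly mechanically from the cited results of Day, Day--Street, Kelly, Benton, and Freyd--Kelly once the monoidal adjunctions are in place.
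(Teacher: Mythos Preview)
Your proposal is correct and follows essentially the same approach as the paper, whose proof is literally ``Relevant propositions from previous sections.'' You have simply made explicit which proposition from which section handles each clause of Definition~\ref{DEF MODEL OF QUANTUM LAMBDA CALCULUS}, and your attributions (Day convolution for closedness, Day--Street for strong monoidality of the Kan extensions, Kelly for monoidal adjunctions, Day's reflection theorem for the $\Gamma$-localised subcategory, Proposition~\ref{F FULL AND FAITHFUL ETA ISO} for idempotence, Lemma~\ref{MULTIPLICATIVE KERNEL CONDITIONS} for strength of $!$, and the essential-geometric-morphism observation for coproduct preservation by $!$) line up with Sections~\ref{CAT MODELS OF LL}--\ref{A STRONGLY COMONAD} and item~4 of Section~\ref{OUTLINE OF THE PROCEDURE}; you are also right that the $H$-machinery and the essential surjectivity of $\Psi$ are not needed for the \emph{abstract} model and are recorded in the theorem's hypotheses only for the concrete model to follow.
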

 \begin{proof} Relevant propositions from
 previous sections.

 \end{proof}

\subsection{Towards a concrete model: constructing $\FinSet\stt{\Phi}\QQ''\stt{\Psi}\QQ$}
\label{Q''CATEGORY AND FUNCTORS PSI AND PHI}\label{Q double prime AND CATEGORY AND FUNCTOR PSI}

The category $\QQ$ of superoperators was defined in Section~\ref{SUPEROPERATORS}.
Here, we discuss a category $\QQ''$
 related to $\QQ$, together with functors $\FinSet\stt{\Phi}\QQ''\stt{\Psi}\QQ$. The goal is to choose $\QQ''$ and the functors $\Phi$ and $\Psi$
 carefully so as to satisfy the requirements of Theorem~\ref{the theorem MAIN THEOREM OF CHAPTER 4}.

Recall the definition of the free affine monoidal category $(\Fwm(\cK),\otimes,I)$:
\begin{itemize}
\item Objects are finite sequences of objects of $\cK$: $\{V_{i}\}_{i\in [n]}=\{V_1,\dots,V_n\}$.
\vspace{.4em}
\item Maps $(\phi,\{f_i\}_{i\in [m]}):\{V_{i}\}_{i\in [n]}\longrightarrow \{W_{i}\}_{i\in [m]}$ are determined by:
\vspace{.6em}
\begin{itemize}
\item[(i)] an injective function $\phi:[m]\rightarrow [n]$,
\item[(ii)] a family of morphisms $f_{i}:V_{\phi(i)}\rightarrow W_{i}$ in the category $\cK$.
\end{itemize}
\vspace{.6em}
\item Tensor $\otimes$ is given by concatenation, with unit  $I$ given by the empty sequence.
\end{itemize}

\begin{proposition}
\label{FREE WEEK SYM MON}
There is a canonical inclusion $\Inc: \cK\rightarrow \Fwm(\cK)$ satisfying:
for any symmetric monoidal category $\cA$ whose tensor unit is terminal and any functor $F:\cK\rightarrow\cA$, there is a unique strong monoidal functor $G:\Fwm(\cK)\rightarrow \cA$, up to isomorphism, such that $G\circ \Inc =F$.
\end{proposition}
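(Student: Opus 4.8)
The plan is to prove this as a standard universal-algebra-style freeness result: exhibit the inclusion, construct $G$ on the generators (objects and morphisms of $\mathrm{Fwm}(\cK)$) in the only way the strong monoidal constraints permit, check functoriality and monoidal coherence, and finally verify uniqueness up to isomorphism. First I would define $\mathrm{Inc}:\cK\to\mathrm{Fwm}(\cK)$ to send an object $V$ to the one-element sequence $\{V\}$ and a morphism $h:V\to W$ to the pair $(\mathrm{id}_{[1]},\{h\})$; a quick check shows this is a functor (not monoidal — $\cK$ need not even be monoidal). Next, given $\cA$ with terminal unit and $F:\cK\to\cA$, I would \emph{define} $G$ on objects by $G(\{V_1,\dots,V_n\}) = FV_1\otimes\cdots\otimes FV_n$ (with $G(I)=I_\cA$), using some fixed bracketing/left-to-right convention; the coherence theorem for symmetric monoidal categories makes the choice of bracketing immaterial up to canonical iso. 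On morphisms, a map $(\phi,\{f_i\}_{i\in[m]}):\{V_i\}_{i\in[n]}\to\{W_i\}_{i\in[m]}$ must be sent to the composite of three pieces: (a) the structural map $FV_1\otimes\cdots\otimes FV_n \to FV_{\phi(1)}\otimes\cdots\otimes FV_{\phi(m)}$ obtained by deleting the tensor factors not in the image of $\phi$ and permuting the rest — here we \emph{crucially use} that $I_\cA$ is terminal, so each deleted factor $FV_j$ admits a unique map $FV_j\to I_\cA$, and these together with the symmetry and unit isomorphisms assemble into the required "projection-and-shuffle" morphism; followed by (b) $Ff_1\otimes\cdots\otimes Ff_m : FV_{\phi(1)}\otimes\cdots\otimes FV_{\phi(m)} \to FW_1\otimes\cdots\otimes FW_m$.

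The key steps, in order: (1) check $G$ preserves identities and composition. Composition in $\mathrm{Fwm}(\cK)$ pairs $(\psi\circ\phi', \{g_i\circ f_{\psi(i)}\})$ for composable $(\phi',\{f\})$ then $(\psi,\{g\})$ — wait, with the injections running backwards one must track indices carefully — and functoriality of $G$ reduces to two facts: that the "projection-and-shuffle" morphisms compose correctly (a coherence computation using uniqueness of maps into the terminal $I_\cA$ plus Mac Lane coherence for the symmetry), and that $F$ is a functor. (2) Define the structure isomorphisms $m_{X,Y}:GX\otimes GY \to G(X\otimes Y)$: since $X\otimes Y$ is literally concatenation of sequences and $GX\otimes GY$ is the tensor of the two corresponding iterated tensors, $m_{X,Y}$ is just the canonical re-association isomorphism, invertible by coherence; and $m_I:I_\cA\to G(I)=I_\cA$ is the identity. (3) Verify the strong-monoidal coherence axioms (associativity pentagon-style square and unit triangles relating $m$, $\alpha$, $\lambda$, $\rho$) — all instances of Mac Lane coherence. (4) Check naturality of $m$ in both variables; this is where the terminal-unit hypothesis re-enters, because a general morphism in $\mathrm{Fwm}(\cK)$ involves the projections, and one must confirm the projection parts slide past $m$, again by uniqueness into $I_\cA$. (5) Verify $G\circ\mathrm{Inc}=F$: on objects $G(\{V\})=FV$ on the nose, on morphisms $G(\mathrm{id}_{[1]},\{h\})=Fh$ since there is nothing to project or shuffle.

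For uniqueness: suppose $G'$ is another strong monoidal functor with $G'\circ\mathrm{Inc}\cong F$. On objects, strong monoidality forces $G'(\{V_1,\dots,V_n\})\cong G'(\{V_1\})\otimes\cdots\otimes G'(\{V_n\})\cong FV_1\otimes\cdots\otimes FV_n\cong G(\{V_1,\dots,V_n\})$, naturally. On morphisms one shows every map of $\mathrm{Fwm}(\cK)$ factors (uniquely, up to the structural isos) as a composite of (i) maps in the image of $\mathrm{Inc}$ tensored with identities, (ii) symmetry maps, and (iii) the canonical "discarding" maps $\{V\}\to I$ coming from terminality of $I$ in $\mathrm{Fwm}(\cK)$ — note $I$ is terminal there because a map into the empty sequence is a map $\phi:[0]\to[n]$, of which there is exactly one, with empty morphism family. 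Any strong monoidal $G'$ must send each generator of type (i)–(iii) to the forced thing (type (iii) because $I_\cA$ is terminal in $\cA$, so $G'(\{V\}\to I)$ is the unique map $FV\to I_\cA$), hence $G'\cong G$ as monoidal functors.

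\smallskip
\noindent\emph{Main obstacle.} The delicate point — and the reason the theorem restricts to categories with terminal tensor unit — is step (1)/(4): showing the "projection-and-shuffle" assignment on morphisms is functorial and that $m$ is natural with respect to it. Every other step is a routine invocation of Mac Lane's coherence theorem, but here one genuinely needs that discarding morphisms $FV_j\to I_\cA$ are \emph{unique}, so that the many a priori different ways of composing projections, symmetries, and unit isomorphisms all agree. I would organize this by proving a single lemma: in any symmetric monoidal category with terminal unit, the "generalized projections" $A_1\otimes\cdots\otimes A_n \to A_{\sigma(1)}\otimes\cdots\otimes A_{\sigma(k)}$ (for $\sigma:[k]\hookrightarrow[n]$) form a well-defined family closed under composition and compatible with tensor — and then functoriality of $G$ and naturality of $m$ both fall out immediately. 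A fully detailed treatment is in~\cite{Malherbe2010}.
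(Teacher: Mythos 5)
Your proposal is correct and follows exactly the argument the paper intends (the paper itself states this proposition without proof, deferring details to \cite{Malherbe2010}): define $\Inc$ via singleton sequences, define $G$ by iterated tensoring of $F$ together with the ``discard-and-shuffle'' structural maps whose existence and coherence rest on terminality of the unit, and prove uniqueness by factoring every arrow of $\Fwm(\cK)$ into generators. Two minor points: the composite of $(\phi,\{f_i\})$ followed by $(\psi,\{g_j\})$ has underlying injection $\phi\circ\psi$ (your displayed formula has the order garbled, though you flag this yourself), and your uniqueness step tacitly reads ``strong monoidal'' as ``strong \emph{symmetric} monoidal'' so that $G'$ preserves the symmetries used in the factorization --- which is indeed the intended reading throughout the paper.
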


We apply this universal construction to the situation where $\cK$ is a discrete category. For later convenience, we let $\cK$ be the discrete category
with finite dimensional Hilbert spaces as objects.
Then $\Fwm(\cK)$ has sequences of Hilbert spaces as objects and dualized, compatible, injective functions as arrows.

Now consider the identity-on-objects inclusion functor $F:\cK\rightarrow \QQ'_s$, where $\QQ'_s$ is the category of simple trace-preserving superoperators defined in Section~\ref{SUPEROPERATORS}. Since $\QQ'_s$ is affine, by Proposition~\ref{FREE WEEK SYM MON}
there exists a unique (up to natural isomorphism) strong monoidal functor $\hat{F}$ such that:
$$\xymatrix@=25pt{
\cK\ar[d]_{\Inc}\ar[r]^{F}&\QQ'_s\\
\Fwm(\cK)\ar[ru]_{\hat{F}}&
}$$

\begin{remark}
\rm
This reveals the purpose of using equality instead of
 $\leq$ in the definition of a trace-preserving superoperator
 (Definition~\ref{TRACE PRESERVING CATEGORY}). When the codomain is the unit,
 there is only one map $f(\rho) = \tr(\rho)$, and therefore $\QQ'_s$ is
 affine.
\end{remark}
Now we remind the reader about the general properties of the free finite coproduct completion $\cC^+$ of a category $\cC$.
The category $\cC^{+}$ has as its objects finite families of objects of $\cC$, say $V=\{V_{a}\}_{a\in A}$, with $A$ a finite set. A morphism from $V=\{V_a\}_{a\in A}$ to $W=\{W_b\}_{b\in B}$ consists of the following:
\begin{itemize}
\item  a function $\phi:A\rightarrow B$,
\item  a family $f=\{f_a\}_{a\in A}$ of morphisms of $\cC$, where
$f_{a}:V_{a}\rightarrow W_{\phi(a)}$.
\end{itemize}
The coproduct in $\cC^{+}$ is just   concatenation of families of objects of $\cC$.
\begin{proposition}
\label{COCOMPLETION OF A CATEGORY}\label{MONO FINITE COMPL}\label{MONOIDAL FUNCTOR PSI}
Given any category $\cA$ with finite coproducts and any functor $F:\cC\rightarrow\cA$, there is a unique finite coproduct preserving functor $G:\cC^{+}\rightarrow \cA$, up to natural isomorphism, such that $G\circ \Inc =F$.
$$\xymatrix@=25pt{
\cC\ar[d]_{\Inc}\ar[r]^{F}&\cA\\
\cC^{+}\ar[ru]_{G}&
}$$
If $\cC$ is a symmetric monoidal category then $\cC^{+}$ is also  symmetric monoidal.
In addition, if we assume that the categories $\cC$ and $\cA$ are symmetric monoidal, then $\Inc$ is a symmetric monoidal functor. If $F$ is a symmetric monoidal functor and tensor distributes over coproducts in $\cA$, then $G$ is a symmetric monoidal functor. Moreover, if $F$ is strong monoidal then so is $G$.
\end{proposition}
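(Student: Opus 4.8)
The plan is to verify the claimed properties of $\cC^+$ one at a time, each by the obvious induction or coproduct-preservation argument, using the universal property as the main lever wherever possible.

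First I would establish the universal property of $\Inc:\cC\to\cC^+$ relative to finite-coproduct-preserving functors. Given $F:\cC\to\cA$ with $\cA$ finitely cocomplete, one \emph{defines} $G$ on an object $V=\{V_a\}_{a\in A}$ by $G(V)=\coprod_{a\in A}F(V_a)$, with $G$ of the empty family being the initial object; on a morphism $(\phi,\{f_a\})$ one uses the copairing of the composites $F(V_a)\stt{F(f_a)}F(W_{\phi(a)})\hookrightarrow\coprod_{b\in B}F(W_b)$. Functoriality is routine from the universal property of coproducts, $G\circ\Inc\cong F$ is immediate since a singleton family has coproduct $F(V)$, and preservation of coproducts holds because concatenation of families goes to the coproduct of the corresponding coproducts. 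Uniqueness up to natural isomorphism follows because any coproduct-preserving $G'$ with $G'\circ\Inc\cong F$ must send $V=\coprod_a\Inc(V_a)$ to $\coprod_a F(V_a)$, naturally.

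Next I would treat the monoidal claims. To see $\cC^+$ is symmetric monoidal when $\cC$ is, I would define the tensor on objects by $\{V_a\}_{a\in A}\otimes\{W_b\}_{b\in B}=\{V_a\otimes W_b\}_{(a,b)\in A\times B}$ and on morphisms in the evident componentwise way, with unit the singleton family $\{I\}$; the associativity, unit, and symmetry isomorphisms are assembled componentwise from those of $\cC$ together with the canonical bijections of the indexing sets, and coherence reduces to coherence in $\cC$ plus coherence of the symmetric monoidal structure on finite sets. That $\Inc$ is (strong) symmetric monoidal is then immediate from the definition since $\Inc(V)\otimes\Inc(W)$ is the singleton $\{V\otimes W\}=\Inc(V\otimes W)$. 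For the statement that $G$ is symmetric monoidal when $F$ is and tensor distributes over coproducts in $\cA$: one needs coherence maps $G(V)\otimes G(W)\to G(V\otimes W)$, i.e. $\bigl(\coprod_a F(V_a)\bigr)\otimes\bigl(\coprod_b F(W_b)\bigr)\to\coprod_{(a,b)}F(V_a\otimes W_b)$; this is exactly where distributivity is used, to rewrite the left side as $\coprod_{(a,b)}\bigl(F(V_a)\otimes F(W_b)\bigr)$, after which one applies the coherence maps of $F$ componentwise. If $F$ is strong, these componentwise maps are isomorphisms and the distributivity maps are isomorphisms, so $G$ is strong.

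The main obstacle I expect is bookkeeping rather than conceptual: verifying the monoidal coherence pentagon and hexagon for the componentwise-defined structure on $\cC^+$, and checking that the coherence axioms for $G$ as a (symmetric) monoidal functor hold, since these mix the coherence data of $\cC$, of $F$, of $\cA$, and of the distributivity isomorphisms all at once. I would organize this by noting that $\cC^+$ can be presented as the composite of the free-coproduct-completion $2$-monad applied to $\cC$, so that the monoidal structure transports along the universal property, and then appeal to standard results (e.g. that distributive monoidal structure is exactly what is needed for the free coproduct completion to remain monoidal) to discharge the coherence verifications; the detailed diagram chases are the kind of routine-but-tedious computation that the paper defers to \cite{Malherbe2010}.
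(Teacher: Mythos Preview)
The paper states this proposition without proof; it is treated as a standard fact about the free finite coproduct completion, with full details deferred to~\cite{Malherbe2010}. Your proposal is correct and is exactly the standard argument one would expect: define $G$ by $G(\{V_a\}_{a\in A})=\coprod_a F(V_a)$, build the tensor on $\cC^+$ as the family indexed over the product of index sets with componentwise tensor, and obtain the monoidal coherence for $G$ from distributivity in $\cA$ composed with the componentwise coherence of $F$. There is nothing to compare against in the paper itself, and nothing to correct in your outline.
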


In the sequel we want to apply Proposition~\ref{COCOMPLETION OF A CATEGORY} to a concrete category, but first:
\begin{remark}\label{IN AND E EMBEDDING}
\rm

 By definition, $\QQ_s$ is a full subcategory of $\QQ$, and the
 inclusion functor $\In:\QQ_s\rightarrow \QQ$ is strong monoidal.
 Also, since every trace preserving superoperator is trace non-increasing,
 $\QQ'_s$ is a subcategory of $\QQ_s$, and the inclusion functor $E:\QQ'_s\rightarrow \QQ_s$
 is strong monoidal as well.
\end{remark}

We apply the machinery of Proposition~\ref{MONOIDAL FUNCTOR PSI} to the composite functor
$$\Fwm(\cK)\stackrel{\hat{F}}\rightarrow\QQ'_s\stackrel{E}\rightarrow \QQ_s\stackrel{\In}\rightarrow \QQ,$$
where $\In$ and $E$ are as defined in Remark~\ref{IN AND E EMBEDDING}.
\begin{definition}\label{DEFINITION OF THE FUNCTOR PSI}\rm
Let $\QQ''=(\Fwm(\cK))^{+}$ and let $\Psi$ be the unique finite
coproduct preserving functor making the following diagram
commute:
\begin{equation}
\label{THE FUNCTOR PSI}
\xymatrix@=25pt{
\Fwm(\cK)\ar[d]_{\Inc}\ar[r]^{\hat{F}}&\QQ'_s\ar[r]^{E}&\QQ_s\ar[r]^{\In}&\QQ.&\\
(\Fwm(\cK))^{+}\ar[rrru]_{\Psi}&&&
}
\end{equation}
Note that such a functor exists by Proposition~\ref{COCOMPLETION OF A CATEGORY}, and it is strong monoidal.
\end{definition}

\begin{remark}
\rm
Since
$\Psi\{\{V_{i}^{a}\}_{i\in[n_a]}\}_{a\in A}=\coprod_{a\in A}\{(V_1^a\otimes\ldots\otimes V_{n_a}^a)_{*}\}_{*\in 1}$,
the functor $\Psi$ is essentially onto objects. Specifically,
 given any object $\{V_a\}_{a\in A}\in |\QQ|$, we can choose a preimage
 (up to isomorphism) as follows:
\begin{equation}
\Psi\{\{V_{i}^{a}\}_{i\in[1]}\}_{a\in A}=\coprod_{a\in A}\{(V_1^a)_{*}\}_{*\in 1}\cong\{V_a\}_{a\in A}.
\end{equation}
\end{remark}

\begin{lemma}\label{fully faithful strong monoidal functor PHI}
Let $\cC$ be an affine category. Then there is a fully faithful strong monoidal functor  $\Phi:(\FinSet,\times,1)\rightarrow (\cC^{+},\otimes_{\cC^{+}},I)$ that preserves coproducts.
\end{lemma}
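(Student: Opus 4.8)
The plan is to exhibit $\Phi$ explicitly as a two-step composite. A finite set $A$ can be regarded as the finite family $\{I\}_{a\in A}$ of copies of the tensor unit $I$ of $\cC$, i.e., as an object of $\cC^{+}$; a function $\phi:A\to B$ together with the canonical identity maps $I\to I$ gives a morphism in $\cC^{+}$. More conceptually, $\cC$ being affine means $I$ is terminal, so the unique functor $\mathbf{1}\to\cC$ picking out $I$ is strong monoidal from the terminal (cartesian) monoidal category; applying the free-coproduct-completion universal property of Proposition~\ref{COCOMPLETION OF A CATEGORY} to the composite $\mathbf{1}\to\cC\stt{\Inc}\cC^{+}$ and noting that $\FinSet\simeq\mathbf{1}^{+}$ yields a coproduct-preserving functor $\Phi:\FinSet\to\cC^{+}$, which on objects sends $A$ to $\coprod_{a\in A} I$. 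Since tensor distributes over coproducts in $\cC^{+}$ (this holds in any $\cC^{+}$, as the excerpt notes the monoidal structure is built so as to make it so) and $F$ here is strong monoidal, Proposition~\ref{COCOMPLETION OF A CATEGORY} gives that $\Phi$ is strong monoidal; coproduct preservation is immediate from the same proposition.

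It remains to check that $\Phi$ is fully faithful. Here I would compute both hom-sets directly. On the one hand, $\FinSet(A,B)=\{\phi:A\to B\}$. On the other hand, a morphism $\coprod_{a\in A} I\to\coprod_{b\in B} I$ in $\cC^{+}$ consists of a function $\phi:A\to B$ together with a family of maps $I\to I$ in $\cC$, one for each $a\in A$. Since $\cC$ is affine, $I$ is terminal, so $\cC(I,I)$ is a singleton; hence the family of maps carries no information and the hom-set is again exactly $\{\phi:A\to B\}$. The bijection is clearly natural and compatible with identities and composition (composition in $\cC^{+}$ reduces on the underlying functions to composition in $\FinSet$, and the $\cC$-components compose trivially), so $\Phi$ is full and faithful. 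One should also record that $\Phi(1)=I$, i.e., the one-element set goes to the unit of $\otimes_{\cC^{+}}$ (the empty family is the unit, and the singleton family $\{I\}$ is isomorphic... actually the unit of $\cC^{+}$ is the empty family, so one must check $\Phi(1)=\{I\}_{*\in 1}$ is the unit, which holds because $\cC^{+}$'s monoidal unit is inherited so that the one-element family of $I$'s serves as unit up to the canonical iso); this is the normalization needed for the later application to Lemma~\ref{MULTIPLICATIVE KERNEL CONDITIONS}.

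The main obstacle is not any single deep step but rather bookkeeping: verifying that the strong monoidal structure maps of $\Phi$ obtained abstractly from Proposition~\ref{COCOMPLETION OF A CATEGORY} are the expected canonical coherence isomorphisms $(\coprod_{a\in A} I)\otimes(\coprod_{b\in B} I)\cong\coprod_{(a,b)\in A\times B} I$, and that these are exactly the ones making the faithfulness computation above compatible with the monoidal structure. I expect this to be routine given the explicit description of $\otimes_{\cC^{+}}$ via distributivity, and I would refer the detailed coherence checks to~\cite{Malherbe2010}.
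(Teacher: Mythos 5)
Your proof is correct and is essentially the intended argument: define $\Phi(A)=\{I\}_{a\in A}$, so that a morphism $\Phi(A)\to\Phi(B)$ in $\cC^{+}$ is a function $\phi:A\to B$ together with maps $I\to I$, which carry no information since $I$ is terminal in the affine category $\cC$; this gives full faithfulness, while coproduct preservation (concatenation of families) and strong monoidality ($\{I\}_{a\in A}\otimes\{I\}_{b\in B}\cong\{I\otimes I\}_{(a,b)\in A\times B}\cong\{I\}_{(a,b)\in A\times B}$) are immediate from the structure of $\cC^{+}$. One small correction to your unit discussion: the monoidal unit of $\cC^{+}$ is the singleton family $\{I\}_{*\in 1}$, not the empty family (which is the initial object, i.e., the empty coproduct), so $\Phi(1)$ is the unit on the nose and no further normalization is needed.
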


\begin{definition}\label{quequeco}\rm
 Recall that $\Fwm(\cK)$ is an affine category and $\QQ''=\Fwm(\cK)^{+}$. Let
$\Phi:\FinSet\rightarrow\QQ''$ be the functor defined by Lemma~\ref{fully faithful strong monoidal functor PHI}.
\end{definition}
\begin{remark}
With the above choice of $\Phi:\FinSet\rightarrow\QQ''$, equation~(\ref{EQUATION MULTIPLICATIVE KERNEL}) in Lemma~\ref{MULTIPLICATIVE KERNEL CONDITIONS} is just the characterization of cartesian products in $\FinSet$ using representable functors.

\end{remark}
\begin{theorem}\label{THE MODEL}
The choice $\cB=\FinSet$, $\cC=\QQ''$,
$\cD=\QQ$, with the functors $\Phi$ as in Definition~\ref{quequeco} and $\Psi$ as in Definition~\ref{DEFINITION OF THE FUNCTOR PSI}, and with $\Gamma$ the class of all finite product cones in $\cD^{op}$, satisfies all the properties required by Theorem~\ref{the theorem MAIN THEOREM OF CHAPTER 4}.
\end{theorem}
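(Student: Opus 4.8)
The plan is to verify, one by one, the eight bulleted hypotheses of Theorem~\ref{the theorem MAIN THEOREM OF CHAPTER 4}, plus the condition on $\Gamma$, for the specific data $\cB=\FinSet$, $\cC=\QQ''=\Fwm(\cK)^{+}$, $\cD=\QQ$, $\Phi$ as in Definition~\ref{quequeco}, $\Psi$ as in Definition~\ref{DEFINITION OF THE FUNCTOR PSI}. Most of these follow immediately by assembling the universal properties and remarks already established. First, $\cB=\FinSet$ has finite products; $\cD=\QQ$ is symmetric monoidal (Remark after Definition~\ref{Q OPLUS CATEGORY}) and $\cC=\QQ''=\Fwm(\cK)^{+}$ is symmetric monoidal since $\Fwm(\cK)$ is affine monoidal and the coproduct completion $(-)^{+}$ preserves symmetric monoidal structure (Proposition~\ref{COCOMPLETION OF A CATEGORY}). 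For coproducts: $\FinSet$ has them; $\QQ$ has finite coproducts by the Lemma in Section~\ref{SUPEROPERATORS}; and $\QQ''$ has finite coproducts because it is by construction a free finite coproduct completion. Distributivity of tensor over coproducts holds in $\FinSet$, holds in $\QQ$ (inherited from $\CPM$, where the biproduct structure distributes over $\otimes$), and holds in $\QQ''$ because the monoidal structure on a coproduct completion $\cC^{+}$ is defined precisely so that tensor distributes over the (concatenation) coproduct. That $\cC=\QQ''$ is affine is exactly Definition~\ref{quequeco}'s setup together with the fact (stated there) that $\Fwm(\cK)$ is affine and $(-)^{+}$ preserves the terminal-unit property.

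Next I would treat the conditions on the functors. $\Psi$ is strong monoidal: it is the composite-induced functor of Definition~\ref{DEFINITION OF THE FUNCTOR PSI}, and strong monoidality is asserted there — it follows from Proposition~\ref{COCOMPLETION OF A CATEGORY} (last sentence) since the composite $\In\circ E\circ\hat F$ is strong monoidal ($\hat F$ strong monoidal by Proposition~\ref{FREE WEEK SYM MON}, $E$ and $\In$ strong monoidal by Remark~\ref{IN AND E EMBEDDING}) and tensor distributes over coproducts in $\QQ$. $\Phi$ is a fully faithful strong monoidal coproduct-preserving functor by Lemma~\ref{fully faithful strong monoidal functor PHI} applied with $\cC^{+}=\QQ''$. $\Psi$ preserves coproducts by construction (it is a finite-coproduct-preserving functor out of a coproduct completion). $\Phi$ preserves coproducts, again by Lemma~\ref{fully faithful strong monoidal functor PHI}. $\Phi$ is full and faithful: Lemma~\ref{fully faithful strong monoidal functor PHI}. $\Psi$ is essentially surjective on objects: this is the Remark following Definition~\ref{DEFINITION OF THE FUNCTOR PSI}, which exhibits an explicit preimage $\{\{V_a\}_{i\in[1]}\}_{a\in A}$ for an arbitrary object $\{V_a\}_{a\in A}$ of $\QQ$. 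The multiplicative-kernel isomorphism $\cC(\Phi(b),c)\times\cC(\Phi(b),c')\cong\cC(\Phi(b),c\otimes c')$ is handled by the Remark before Theorem~\ref{THE MODEL}: with $\Phi$ chosen as in Lemma~\ref{fully faithful strong monoidal functor PHI}, $\Phi(b)$ is (up to isomorphism) a coproduct of $b$ copies of the tensor unit $I$, so $\cC(\Phi(b),c)\cong c^{b}$ pointwise, and the required isomorphism reduces to the distributive law $(c\otimes c')^{b}\cong c^{b}\times c'^{b}$ together with the fact that $\Phi(1)=I$ — equivalently, it is the representable-functor characterization of products in $\FinSet$ transported along $\Phi$. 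Finally, the hypothesis on $\Gamma$: we take $\Gamma$ to be the class of all finite product cones in $\cD^{op}=\QQ^{op}$, which by definition includes all finite product cones, and which is preserved by the opposite tensor functor because tensor distributes over coproducts in $\QQ$ (so $(-)\otimes W$ sends finite coproducts to finite coproducts, i.e. its opposite sends finite product cones in $\QQ^{op}$ to finite product cones).

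The step I expect to be the genuine content — everything else being bookkeeping against earlier results — is verifying the multiplicative-kernel condition~(\ref{EQUATION MULTIPLICATIVE KERNEL}), since it is the one hypothesis that is not a formal consequence of a universal property but genuinely depends on the interaction between the chosen $\Phi$, the affine structure, and the distributive law; one must check that the isomorphism is natural in $c,c'$ and compatible with the monoidal coherence, and that the specific functor produced by Lemma~\ref{fully faithful strong monoidal functor PHI} really does send the object $b\in\FinSet$ to the $b$-fold tensor-copower of $I$ in $\QQ''$ in a way that makes hom-sets out of it compute as powers. A secondary subtlety is confirming that the affineness of $\QQ''$ is genuinely available: this rests on the chain $\QQ'_s$ affine $\Rightarrow$ $\Fwm(\cK)$ affine $\Rightarrow$ $\Fwm(\cK)^{+}$ has terminal unit, and the middle implication uses that $\hat F$ lands in an affine category together with the Remark explaining why equality (rather than $\leq$) in the trace-preserving condition forces a unique map into the unit. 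Once these two points are secured, the remaining verifications are routine citations, and the theorem follows by invoking Theorem~\ref{the theorem MAIN THEOREM OF CHAPTER 4}. \qed
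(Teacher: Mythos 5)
Your proposal is correct and follows essentially the same route as the paper, which likewise establishes Theorem~\ref{THE MODEL} by checking the hypotheses of Theorem~\ref{the theorem MAIN THEOREM OF CHAPTER 4} one at a time against Propositions~\ref{FREE WEEK SYM MON} and~\ref{COCOMPLETION OF A CATEGORY}, Lemma~\ref{fully faithful strong monoidal functor PHI}, and the surrounding remarks (in particular the remark reducing condition~(\ref{EQUATION MULTIPLICATIVE KERNEL}) to the representable-functor characterization of products in $\FinSet$, whose verification you rightly single out as the only non-formal step). One small correction: the affineness of $\Fwm(\cK)$ is not a consequence of $\QQ'_s$ being affine --- $\Fwm(\cK)$ is affine by construction as the free affine monoidal category --- rather, the affineness of $\QQ'_s$ is what licenses the existence of $\hat F$ via Proposition~\ref{FREE WEEK SYM MON}; this does not affect the validity of your verification.
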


\subsection{A concrete model}\label{A CONCRETE MODEL}
\begin{theorem}
 Let $\QQ$, $\QQ''$, $\Phi$, $\Psi$, and $\Gamma$ be defined as in Sections~\ref{SUPEROPERATORS} and~\ref{Q''CATEGORY AND FUNCTORS PSI AND PHI}. Then
\[\xymatrix{
[\FinSet^{op},\Set]\ar@<1ex>[rr]^{Lan_{\Phi}}&&[(\QQ'')^{op},\Set]\ar@<1ex>[rr]^{F}\ar@<1ex>[ll]^{\Phi^{*}}_{\bot}&&[\QQ^{op},\Set]_{\Gamma}\ar@<1ex>[ll]^{G}_{\bot}}\]
  forms a concrete model of the quantum lambda calculus.
 \end{theorem}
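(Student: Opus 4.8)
The plan is to assemble the two main theorems that precede this statement. By Theorem~\ref{THE MODEL}, the specific data $\cB=\FinSet$, $\cC=\QQ''$, $\cD=\QQ$, together with the functors $\Phi$ of Definition~\ref{quequeco}, $\Psi$ of Definition~\ref{DEFINITION OF THE FUNCTOR PSI}, and $\Gamma$ the class of all finite product cones, satisfy every hypothesis of Theorem~\ref{the theorem MAIN THEOREM OF CHAPTER 4}. Hence, applying Theorem~\ref{the theorem MAIN THEOREM OF CHAPTER 4} directly, the displayed pair of adjunctions
\[\xymatrix{
[\FinSet^{op},\Set]\ar@<1ex>[rr]^{Lan_{\Phi}}&&[(\QQ'')^{op},\Set]\ar@<1ex>[rr]^{F}\ar@<1ex>[ll]^{\Phi^{*}}_{\bot}&&[\QQ^{op},\Set]_{\Gamma}\ar@<1ex>[ll]^{G}_{\bot}}\]
forms an \emph{abstract} model of the quantum lambda calculus. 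So the only remaining task is to verify the one extra clause in Definition~\ref{DEF MODEL OF QUANTUM LAMBDA CALCULUS} that distinguishes a \emph{concrete} model from an abstract one: the existence of a full and faithful embedding $\QQ\hookrightarrow \cC_{T}$ into the Kleisli category of the monad $T=G\circ F$, preserving tensor and coproduct up to isomorphism.

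The key step is therefore to produce this embedding, and the natural candidate is the functor $H:\cD\to\hat{\cC}_{T}$ constructed in Section~\ref{COPRODUCT INDUCED KLEISLI}, with $\cD=\QQ$ and $\hat{\cC}=[(\QQ'')^{op},\Set]$. First I would observe that $H$ is well-defined here: its construction in Section~\ref{CANONICAL CHOICE} requires only that $\Psi$ be essentially surjective on objects, which holds by the Remark following Definition~\ref{DEFINITION OF THE FUNCTOR PSI} (where an explicit choice of preimages is exhibited), and that the comparison functor $C:\hat{\cC}_{T}\to\hat{\cD}_{\Gamma}$ be full and faithful. Next, the three ``boxed'' paragraphs of Section~\ref{COPRODUCT INDUCED KLEISLI} establish exactly the three properties we need: $C$ is strong monoidal, hence $H$ is a strong monoidal functor (so it preserves $\otimes$ up to isomorphism); and the two propositions there show $H$ preserves finite coproducts, using precisely the hypothesis that $\Gamma$ contains all finite product cones — which is our choice of $\Gamma$. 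Finally, $H$ is full and faithful: this follows from the full faithfulness of the Yoneda functor $Y_\Gamma:\cD\to\hat{\cD}_\Gamma$ together with the full faithfulness of the comparison functor $C$, since by construction $C\circ H = Y_\Gamma$ on the nose (up to the chosen isomorphisms), and a composite $C\circ H$ being full and faithful with $C$ full and faithful forces $H$ to be full and faithful.

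The main obstacle — and the one place where one must be careful rather than merely cite earlier results — is checking that $\QQ$ is genuinely the codomain-relevant category and that the tensor and coproduct in $\QQ$ match those transported along $H$. Concretely, one must confirm that the tensor on $\QQ$ from Section~\ref{SUPEROPERATORS} agrees, under $H$, with the Day-convolution tensor on $\hat{\cC}_{T}$ restricted along $H$, and likewise that finite coproducts in $\QQ$ (which exist by the Lemma in Section~\ref{SUPEROPERATORS}) are sent to coproducts in $\hat{\cC}_{T}$. Both reduce to the compatibility of $\Psi$ with the monoidal and coproduct structure: $\Psi$ is strong monoidal and coproduct-preserving by Definition~\ref{DEFINITION OF THE FUNCTOR PSI} and Proposition~\ref{COCOMPLETION OF A CATEGORY}, and since $F(\cC(-,c))\cong\cD(-,\Psi(c))$ (computed in Section~\ref{CANONICAL CHOICE}), the monoidal and coproduct structure on the representables in $\hat{\cC}_T$ transports correctly. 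I would close by noting that all these verifications are carried out in detail in~\cite{Malherbe2010}, so here it suffices to state that combining Theorem~\ref{the theorem MAIN THEOREM OF CHAPTER 4}, Theorem~\ref{THE MODEL}, and the properties of $H$ from Section~\ref{COPRODUCT INDUCED KLEISLI} yields the desired concrete model.

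\begin{proof}
By Theorem~\ref{THE MODEL}, the data $\cB=\FinSet$, $\cC=\QQ''$, $\cD=\QQ$, $\Phi$, $\Psi$, and $\Gamma$ satisfy all hypotheses of Theorem~\ref{the theorem MAIN THEOREM OF CHAPTER 4}; hence the displayed pair of adjunctions forms an abstract model of the quantum lambda calculus. It remains to exhibit the full and faithful, tensor- and coproduct-preserving embedding of $\QQ$ into the Kleisli category $\hat{\cC}_T$ required by Definition~\ref{DEF MODEL OF QUANTUM LAMBDA CALCULUS}. Since $\Psi$ is essentially surjective on objects (see the remark following Definition~\ref{DEFINITION OF THE FUNCTOR PSI}) and the comparison functor $C:\hat{\cC}_T\to\hat{\cD}_\Gamma$ is full and faithful, the functor $H:\QQ\to\hat{\cC}_T$ of Section~\ref{CANONICAL CHOICE} is defined, and by construction $C\circ H=Y_\Gamma$ up to the canonical isomorphisms. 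As $Y_\Gamma$ and $C$ are full and faithful, so is $H$. By the results of Section~\ref{COPRODUCT INDUCED KLEISLI}, $C$ is strong monoidal, hence so is $H$, and since $\Gamma$ contains all finite product cones, $H$ preserves finite coproducts; moreover the monoidal and coproduct structures of $\QQ$ transport correctly because $\Psi$ is strong monoidal and coproduct-preserving and $F(\cC(-,c))\cong\cD(-,\Psi(c))$. Thus $H$ is the required embedding, and the adjunctions form a concrete model of the quantum lambda calculus. Further details are in~\cite{Malherbe2010}.
\end{proof}
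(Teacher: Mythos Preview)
Your proposal is correct and follows the same approach as the paper, which simply states that the result follows from Theorems~\ref{the theorem MAIN THEOREM OF CHAPTER 4} and~\ref{THE MODEL}. You have been considerably more explicit than the paper in spelling out why the \emph{concrete} (as opposed to merely abstract) requirement of Definition~\ref{DEF MODEL OF QUANTUM LAMBDA CALCULUS} is met, by invoking the functor $H$ of Section~\ref{COPRODUCT INDUCED KLEISLI} and checking its full faithfulness and its preservation of tensor and coproduct; the paper leaves this implicit in its one-line proof and defers details to~\cite{Malherbe2010}.
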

\begin{proof}
 This follows from  Theorems~\ref{the theorem MAIN THEOREM OF CHAPTER 4} and~\ref{THE MODEL}.
 \end{proof}
\section{Conclusions and future work}

 We have constructed mathematical (semantic) models
 of higher-order quantum computation, specifically for the
 quantum lambda calculus of Selinger and Valiron. The central idea of
 our model construction was to apply the presheaf construction to a
 sequence of three categories and two functors, and to find a set of
 sufficient conditions for the resulting structure to be a valid
 model. The construction depends crucially on properties of presheaf
 categories, using Day's convolution theory and the  Kelly-Freyd notion of continuity of functors.

 We then identified specific base categories and functors that
 satisfy these abstract conditions, based on the category of
 superoperators. Thus, our choice of base categories ensures that the
 resulting model has the ``correct'' morphisms at base types, whereas
 the presheaf construction ensures that it has the ``correct''
 structure at higher-order types.

 Our work has concentrated solely on the existence of such a model.
 One question that we have not yet addressed is specific properties of
 the interpretation of quantum lambda calculus in this model. It would
 be interesting, in future work, to analyze whether this particular
 interpretation yields new insights into the nature of higher-order
 quantum computation, or to use this model to compute properties of
 programs.

 \paragraph{\bf Acknowledgements.} This research was supported by the
 Natural Sciences and Engineering Research Council of Canada (NSERC)
 and by the Program for the Development of Basic Sciences, Uruguay
 (PEDECIBA).

\end{document}